\numberwithin{theorem}{section}
\crefname{remark}{Remark}{Remarks}
\crefname{assumption}{Assumption}{Assumptions}
\crefname{example}{Example}{Examples}
\title{Randomized subspace correction methods\\ for convex optimization\thanks{Submitted to arXiv.\funding{This work was supported by the KAUST Baseline Research Fund.}}
}
\author{
Boou Jiang\thanks{Applied Mathematics and Computational Sciences Program, Computer, Electrical and Mathematical Science and Engineering Division, King Abdullah University of Science and Technology~(KAUST), Thuwal 23955, Saudi Arabia
 (\email{boou.jiang@kaust.edu.sa},
 \email{jongho.park@kaust.edu.sa}, 
 \email{jinchao.xu@kaust.edu.sa}).
}
\and
Jongho Park\footnotemark[2]
\and
Jinchao Xu\footnotemark[2]
 }
\begin{document}

\maketitle

\begin{abstract}
This paper introduces an abstract framework for randomized subspace correction methods for convex optimization, which unifies and generalizes a broad class of existing algorithms, including domain decomposition, multigrid, and block coordinate descent methods.
We provide a convergence rate analysis ranging from minimal assumptions to more practical settings, such as sharpness and strong convexity.
While most existing studies on block coordinate descent methods focus on nonoverlapping decompositions and smooth or strongly convex problems, our framework extends to more general settings involving arbitrary space decompositions, inexact local solvers, and problems with weaker smoothness or convexity assumptions.
The proposed framework is broadly applicable to convex optimization problems arising in areas such as nonlinear partial differential equations, imaging, and data science.
\end{abstract}

\begin{keywords}
Subspace correction methods, Block coordinate descent methods, Domain decomposition methods, Randomized methods, Convex optimization
\end{keywords}

\begin{AMS}
90C25, 65N55, 65J05, 90C06
\end{AMS}

\section{Introduction}
\label{Sec:Introduction}
The main purpose of this paper is to develop an abstract framework for randomized subspace correction methods in convex optimization.
While subspace correction methods~\cite{TX:2002,Xu:1992} generalize a broad class of iterative algorithms, convex optimization itself encompasses a wide range of applications, including nonlinear partial differential equations~(PDEs), imaging, and data science.
Consequently, the proposed framework is flexible and applicable to a very broad range of problems.
Moreover, it accommodates highly general settings, such as weaker smoothness and convexity assumptions~\cite{Park:2022b,RD:2020}.

Subspace correction methods~\cite{Xu:1992} follow a divide-and-conquer strategy by decomposing the original problem into local subproblems defined on subspaces, which are solved independently. Many classical and modern iterative methods, including block relaxation, domain decomposition, and multigrid methods, can be viewed as instances of subspace correction methods. The theory has evolved over the past decades, covering both linear~\cite{Xu:1992,XZ:2002} and nonlinear problems~\cite{Carstensen:1997,Park:2020,TX:2002}.

Block coordinate descent methods are prominent examples of subspace correction methods for convex optimization. These methods solve local subproblems restricted to blocks of coordinates, often via gradient or proximal steps. More general updates, like upper bound minimization~\cite{HWRL:2017,RHL:2013}, are also possible. Their computational efficiency has led to widespread adoption. Key early results include~\cite{Tseng:2001,XY:2013}, and a comprehensive survey appears in~\cite{Wright:2015}. Recent advances cover convergence of cyclic~\cite{BT:2013,SH:2015}, randomized~\cite{Nesterov:2012,RT:2014}, accelerated~\cite{LLX:2015,LX:2015}, and parallel variants~\cite{FR:2015,NC:2016,RT:2016}. Applications include deep neural network training~\cite{ZLLY:2019,ZB:2017}.

In numerical analysis, domain decomposition and multigrid methods are essential examples of subspace correction methods. Their convergence for smooth convex optimization was studied in~\cite{TE:1998,TX:2002}, and later extended to constrained and nonsmooth cases~\cite{Badea:2006,BK:2012,BTW:2003,Park:2020}. These methods have been applied to various nonlinear variational problems, including PDEs~\cite{CHW:2020a,LP:2025a,Park:2024a}, variational inequalities~\cite{BK:2012,BTW:2003,Park:2024c}, elastoplasticity~\cite{Carstensen:1997}, and mathematical imaging~\cite{CTWY:2015,HL:2015,LP:2019}.

Subspace correction methods are classified as parallel or successive depending on the order of subproblem updates~\cite{TW:2005,Xu:1992}. In parallel methods (additive Schwarz), all subproblems are solved concurrently; in successive methods (multiplicative Schwarz), subproblems are solved sequentially.

Randomizing the order of subproblem updates leads to randomized subspace correction methods~\cite{GO:2012,HXZ:2019}, which often exhibit better performance compared to fixed-order approaches.
For quadratic optimization or linear problems, improved worst-case convergence rates under randomization are established in~\cite{HXZ:2019}.
A notable result in~\cite{SY:2021} shows that the worst-case complexities of cyclic coordinate descent methods and randomized coordinate descent methods are $\mathcal{O} (J^{4} \kappa \log(1/\epsilon) )$ and $\mathcal{O} (J^{2} \kappa \log(1/\epsilon) )$, respectively, where $J$ denotes the dimension of the variable, $\epsilon$ is a prescribed tolerance for the energy error, and $\kappa$ is a problem-dependent factor.
More precisely, by constructing a specific quadratic objective, it was shown that both complexity bounds can be attained simultaneously.
This demonstrates that cyclic coordinate descent methods can be up to $\mathcal{O}(J^{2})$ times worse than randomized coordinate descent methods, thereby clarifying the role of randomization: when the optimal ordering of subspaces is unknown, randomization can yield better performance than certain fixed-order strategies.
One may also refer to~\cite{CP:2016} for an analysis of how randomization averages antisymmetric terms in descent inequalities. This mechanism is beneficial for the development of accelerated methods~\cite{LLX:2015,LX:2015} and has motivated extensive research on randomized methods in convex optimization~\cite{CHW:2020b,NC:2016,Nesterov:2013,RT:2014}.
In particular,~\cite{CHW:2020b} studies randomized subspace correction methods for convex optimization with linear local problems, which can be viewed as a preliminary work for the present paper.

This paper introduces an abstract framework for randomized subspace correction methods for convex composite optimization~\cite{Nesterov:2013} on reflexive Banach spaces, accommodating diverse levels of smoothness and convexity~\cite{Park:2022b,RD:2020}.
The framework unifies a wide range of decomposition strategies, including block partitioning~\cite{BT:2013,Nesterov:2012,RT:2014} and overlapping domain decompositions commonly used in the numerical solution of PDEs~\cite{LP:2025a,Park:2024a,TW:2005,Xu:1992}. It supports both exact and inexact local solvers, encompassing methods such as coordinate descent, Bregman descent~\cite{DL:2015,GLLC:2020,HY:2016}, and constraint decomposition~\cite{BF:2024,Tai:2003}.
We establish convergence theorems that extend recent results, including those in~\cite{GO:2012,RT:2014}.
More precisely, by identifying a structural relationship between randomized and parallel subspace correction methods (see~\cite{HXZ:2019} for the linear case), we analyze the convergence of randomized subspace correction methods by leveraging existing analyses of parallel methods, such as~\cite{LP:2025b,Park:2020,PX:2024}. Furthermore, we provide new analyses that yield sharper estimates under stronger, yet commonly adopted, assumptions; see, for example,~\cite{RT:2014}.

In \cref{Table:theorems}, we summarize the main results of this paper.

\begin{table}
    \centering
    \resizebox{\textwidth}{!}{
    \begin{tabular}{c|ccc}
        \textbf{Results} & \textbf{Required assumptions} & \textbf{Convergence rates} & \textbf{Key constants} \\
        \hline
        \cref{Thm:conv} & \makecell{local problems (\cref{Ass:local}) \\ stable decomposition (\cref{Ass:stable})} & sublinear & \makecell{$q$, $\omega$, $\rho$ (\cref{Ass:local}) \\ $C_{K_0}$ (\cref{Ass:stable})} \\
        \hline
        \cref{Thm:conv_sharp} & \makecell{local problems (\cref{Ass:local}) \\ stable decomposition (\cref{Ass:stable}) \\ sharpness (\cref{Ass:sharp})} & \makecell{linear ($p = q$) \\ sublinear ($p > q$)} & \makecell{$q$, $\omega$, $\rho$ (\cref{Ass:local}) \\ $C_{K_0}$ (\cref{Ass:stable}) \\ $p$, $\mu_{K_0}$ (\cref{Ass:sharp})} \\
        \hline
        \cref{Rem:conv_sharp} & \makecell{local problems (\cref{Ass:local}) \\ global stable decomposition (\cref{Rem:Psi}) \\ sharpness (\cref{Ass:sharp})} & linear ($p = q$) & \makecell{$q$, $\omega$, $\rho$ (\cref{Ass:local}) \\ $C_{V}$ (\cref{Rem:Psi}) \\ $p$, $\mu_{K_0}$ (\cref{Ass:sharp})}\\
        \hline
        \cref{Thm:conv_strong} & \makecell{local problems (\cref{Ass:local}) \\ stable decomposition (\cref{Ass:stable}) \\ strong convexity (\cref{Ass:strong})} & linear ($q = 2$) & \makecell{$q$, $\omega$, $\rho$ (\cref{Ass:local}) \\ $C_{K_0}$ (\cref{Ass:stable}) \\ $\mu_{K_0}$, $\mu_{F, K_0}$ (\cref{Ass:strong})} \\
        \hline
        \cref{Rem:conv_strong} & \makecell{local problems (\cref{Ass:local}) \\ global stable decomposition (\cref{Rem:Psi}) \\ strong convexity (\cref{Ass:strong})} & linear ($q = 2$) & \makecell{$q$, $\omega$, $\rho$ (\cref{Ass:local}) \\ $C_V$ (\cref{Rem:Psi}) \\ $\mu_{K_0}$, $\mu_{F, K_0}$ (\cref{Ass:strong})}
    \end{tabular}
    }
    \caption{Main convergence theorems of this paper, and corresponding required assumptions, convergence rates, and key constants.
    All rates are for the expected energy error.}
    \label{Table:theorems}
\end{table}

The remainder of this paper is organized as follows.
In \cref{Sec:MSC}, we present an abstract framework of randomized subspace correction methods for convex optimization.
In \cref{Sec:Convergence}, we derive convergence theorems under various conditions on the target problem.
In \cref{Sec:Related}, we explain how the proposed framework relates to existing results.
In \cref{Sec:Applications}, we summarize possible applications of the proposed framework from diverse fields of science and engineering.
In \cref{Sec:Numerical}, we provide numerical results for randomized subspace correction methods.
Finally, in \cref{Sec:Conclusion}, we conclude the paper with some remarks.

\section{Subspace correction methods}
\label{Sec:MSC}
This section presents an abstract framework for randomized subspace correction methods for convex optimization.
In particular, we show that the convergence analysis of randomized subspace correction methods for convex optimization can be carried out within the framework of parallel subspace correction methods~\cite{Park:2020}, extending the analogy previously established for linear problems~\cite{GO:2012,HXZ:2019}. 
The proposed framework is highly versatile, accommodating diverse space decomposition settings for the model problem, a broad range of smoothness and convexity levels in the objective functional, and various types of inexact local solvers.

Let \( V \) be a reflexive Banach space equipped with the norm \( \| \cdot \| \).
Its topological dual is denoted by \( V^* \), and the duality pairing between \( V^* \) and \( V \) is written as
\[
\langle p, v \rangle = p(v), \quad p \in V^*,\ v \in V.
\]

Throughout this paper, we adopt the convention $0/0 = 0$ for arguments of $\sup$ and $0/0 = \infty$ for arguments of $\inf$.

\subsection{Space decomposition and subspace correction}
We consider the following abstract convex optimization problem:
\begin{equation}
\label{model}
    \min_{v \in V} \left\{ E(v) := F(v) + G(v) \right\},
\end{equation}
where \( F \colon V \to \mathbb{R} \) is a G\^{a}teaux differentiable and convex functional, and \( G \colon V \to \overline{\mathbb{R}} \) is a proper, convex, and lower semicontinuous functional.
The problem~\eqref{model} is referred to as a composite optimization problem~\cite{Nesterov:2013}, as it involves a nonsmooth term $G$ in addition to the smooth term $F$.
We further assume that the energy functional \( E \) is coercive, which guarantees the existence of a minimizer \( u \in V \) for the problem~\eqref{model}.

We assume that the solution space $V$ of~\eqref{model} admits a space decomposition of the form
\begin{equation}
\label{space_decomposition}
    V = \sum_{j=1}^J V_j,
\end{equation}
where each $V_j$, $j \in [J] = \{ 1, 2, \dots, J \}$, is a closed subspace of $V$.
The space decomposition~\eqref{space_decomposition} covers various algorithms, including block coordinate descent methods~\cite{Nesterov:2012,RT:2014}, domain decomposition methods~\cite{Park:2020,TW:2005}, and multigrid methods~\cite{TX:2002,XZ:2017}.
It is well known~\cite[Equation~(2.15)]{XZ:2002} that the space decomposition~\eqref{space_decomposition} satisfies the stable decomposition property.
Namely, for any \( q \in [1, \infty) \), we have
\begin{equation}
\label{norm_stable}
\sup_{\| w \| = 1} \inf_{\sum_{j=1}^J w_j = w} \left( \sum_{j=1}^J \| w_j \|^q \right)^{\frac{1}{q}} < \infty,
\end{equation}
where \( w \in V \) and \( w_j \in V_j \).


Subspace correction methods involve solving local problems defined on subspaces \( \{ V_j \}_{j=1}^J \).
For a given \( v \in V \), the optimal residual in a subspace \( V_j \) is obtained by solving the local minimization problem
\begin{equation}
\label{local_exact}
\min_{w_j \in V_j} E(v + w_j).
\end{equation}
Alternating minimization methods~\cite{Beck:2015,BT:2013} and certain domain decomposition methods (see, e.g.,~\cite{BK:2012,LP:2025a,Park:2024a,TX:2002}) fall into the category of subspace correction methods with exact local solvers as in~\eqref{local_exact}.
In contrast, block coordinate descent methods typically solve the local problem~\eqref{local_exact} inexactly, often using a single iteration of gradient descent~\cite{BT:2013,Nesterov:2012}, proximal descent~\cite{LX:2015,RT:2014}, or Bregman descent~\cite{DL:2015,GLLC:2020,HY:2016}.
Some methods further employ surrogate techniques, where~\eqref{local_exact} is replaced by an approximate problem with lower computational complexity; see, e.g.,~\cite{BF:2024,CHW:2020a,Tai:2003}.

To encompass all these methods, following~\cite{Park:2020,PX:2024}, we consider local problems of the form
\begin{equation}
\label{local_inexact}
    \min_{w_j \in V_j} \left\{ E_j(w_j; v) := F_j(w_j; v) + G_j(w_j; v) \right\},
\end{equation}
where \( F_j(\cdot ; v) \colon V_j \to \mathbb{R} \) and \( G_j(\cdot ; v) \colon V_j \to \overline{\mathbb{R}} \) are convex functionals for each \( v \in V \).
The functionals \( F_j(\cdot ; v) \) and \( G_j(\cdot ; v) \) serve as approximations to the exact local functionals \( F(v + \cdot) \) and \( G(v + \cdot) \) on $V$, respectively.
An example of~\eqref{local_inexact} corresponding to a proximal descent step is presented in \cref{Ex:local_inexact}.
Additional examples can be found in~\cite[Section~6.4]{Park:2020}.

\begin{example}
\label{Ex:local_inexact}
If we set
\begin{equation*}
F_j(w_j; v) = F(v) + \langle F'(v), w_j \rangle + \frac{1}{2\tau_j} \| w_j \|^2,\
G_j (w_j; v) = G (v + w_j), \quad
v \in V,\ w_j \in V_j,
\end{equation*}
for some \( \tau_j > 0 \), then the local problem~\eqref{local_inexact} corresponds to a single proximal descent step~\cite{LX:2015,RT:2014} with step size \( \tau_j \) for minimizing \( E (v + w_j) \).
\end{example}

The abstract parallel subspace correction method for solving the convex optimization problem~\eqref{model}, based on the space decomposition~\eqref{space_decomposition} and local solvers~\eqref{local_inexact}, is presented in \cref{Alg:PSC}.

\begin{algorithm}
\caption{Parallel subspace correction method for~\eqref{model}}
\begin{algorithmic}[]
\label{Alg:PSC}
\STATE Given the step size $\tau > 0$:
\STATE Choose $u^{(0)} \in \operatorname{dom} G$.
\FOR{$n=0,1,2,\dots$}
    \FOR{$j \in [J]$ \textbf{in parallel}}
        \STATE $\displaystyle
        w_j^{(n+1)} \in \operatornamewithlimits{\arg\min}_{w_j \in V_j} E_j (w_j; u^{(n)})
        $
    \ENDFOR
    \STATE $\displaystyle
    u^{(n+1)} = u^{(n)} + \tau \sum_{j=1}^J w_j^{(n+1)}
    $
\ENDFOR
\end{algorithmic}
\end{algorithm}

Another type of subspace correction method is the successive subspace correction method, in which the local problems in the subspaces are solved sequentially.  
In this paper, we focus on a particular variant known as the randomized subspace correction method, where the order of the local problems is chosen randomly; see \cref{Alg:RSC}.

\begin{algorithm}
\caption{Randomized subspace correction method for~\eqref{model}}
\begin{algorithmic}[]
\label{Alg:RSC}
\STATE Choose $u^{(0)} \in \operatorname{dom} G$.
\FOR{$n=0,1,2,\dots$}
    \STATE Sample $j \in [J]$ from the uniform distribution on $[J]$.
    \STATE $\displaystyle
        w_j^{(n+1)} \in \operatornamewithlimits{\arg\min}_{w_j \in V_j} E_j (w_j; u^{(n)})
        $
    \STATE $\displaystyle
    u^{(n+1)} = u^{(n)} + w_j^{(n+1)}
    $
\ENDFOR
\end{algorithmic}
\end{algorithm}

\subsection{Descent property}
In what follows, we denote by \( d \) and \( d_j \) the Bregman divergences associated with \( F \) and \( F_j \), respectively:
\begin{align*}
d(w; v) = F(v + w) - F(v) - \langle F'(v), w \rangle, \quad & v, w \in V, \\
d_j(w_j; v) = F_j(w_j; v) - F_j(0; v) - \langle F_j'(0; v), w_j \rangle, \quad & v \in V,\ w_j \in V_j.
\end{align*}

To ensure the convergence of the randomized subspace correction method, we adopt the assumptions on the local problem~\eqref{local_inexact} summarized in \cref{Ass:local}.
We note that \cref{Ass:local} provides a more general framework than several recent works, as it extends the smooth settings in~\cite{LP:2025b,PX:2024} to the nonsmooth case, and employs a broader local stability assumption~(see \cref{Ass:local}(c)) than the one used in~\cite{Park:2020}.

\begin{assumption}[local problems]
\label{Ass:local}
For any $j \in [J]$ and $v \in V$, the local functionals $F_j (\cdot ; v) \colon V_j \to \mathbb{R}$ and $G_j (\cdot; v) \colon V_j \to \overline{\mathbb{R}}$ satisfy the following:
\begin{enumerate}[(a)]
\item (convexity) The functional \( F_j(\cdot; v) \) is G\^{a}teaux differentiable and convex, while \( G_j(\cdot; v) \) is proper, convex, and lower semicontinuous.
Moreover, the composite functional \( E_j(\cdot; v) \) is coercive.

\item (consistency) We have
\begin{equation*}
F_j (0;v) = F(v), \quad
G_j (0;v) = G(v),
\end{equation*}
and
\begin{equation*}
\langle F_j' (0;v), w_j \rangle = \langle F'(v), w_j \rangle ,
\quad w_j \in V_j.
\end{equation*}

\item (stability) For some $\omega \in (0, 1] \cup (1, \rho)$, we have
\begin{equation*}
d (w_j; v) \leq \omega d_j (w_j ; v), \quad
G(v + w_j) \leq G_j (w_j; v),
\quad w_j \in V_j,
\end{equation*}
where the constant $\rho$ is defined as
\begin{equation}
\label{rho}
    \rho = \min_{j \in [J]} \inf_{v \in V,\ w_j \in V_j} \frac{\langle d_j' (w_j; v), w_j \rangle}{d_j (w_j; v)}.
\end{equation}
\end{enumerate}
\end{assumption}

The constant \( \rho \) defined in~\eqref{rho} is always greater than or equal to $1$ as a consequence of \cref{Ass:local}(a,b).  
In the case of linear problems, one can verify that $\rho = 2$~\cite[Example~1]{PX:2024}, which is consistent with~\cite{TW:2005,XZ:2002}.
A nonlinear example where \( \rho > 1 \) is provided in~\cite[Example~A.2]{LP:2025b}.  
In \cref{Lem:local_sufficient_decrease}, which is a nonsmooth extension of~\cite[Lemma~1]{PX:2024}, we show that \cref{Ass:local} ensures that solving each local problem leads to a decrease in the global energy.

\begin{lemma}
\label{Lem:local_sufficient_decrease}
For $j \in [J]$ and $v \in V$, let
\begin{equation}
\label{w_hat}
\hat{w}_j \in \operatornamewithlimits{\arg\min}_{w_j \in V_j} E_j (w_j; v).
\end{equation}
Under \cref{Ass:local}, we have
\begin{equation*}
E(v) - E(v + \hat{w}_j) \geq \left( 1 - \frac{\omega}{\rho} \right) \langle d_j' (\hat{w}_j; v), \hat{w}_j \rangle \geq 0.
\end{equation*}
\end{lemma}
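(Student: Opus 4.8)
The plan is to localize everything: I will rewrite the global energy gap $E(v) - E(v+\hat{w}_j)$ in terms of quantities attached to the local functional $E_j(\cdot;v)$, and only invoke the definition of $\rho$ at the very end. First I would expand the smooth part through its Bregman divergence, $F(v+\hat{w}_j) = F(v) + \langle F'(v),\hat{w}_j\rangle + d(\hat{w}_j;v)$, so that
\[
E(v) - E(v+\hat{w}_j) = G(v) - \langle F'(v),\hat{w}_j\rangle - d(\hat{w}_j;v) - G(v+\hat{w}_j).
\]
Then I apply the stability bounds of \cref{Ass:local}(c), $d(\hat{w}_j;v) \le \omega\, d_j(\hat{w}_j;v)$ and $G(v+\hat{w}_j) \le G_j(\hat{w}_j;v)$, followed by the consistency identities of \cref{Ass:local}(b), $G(v) = G_j(0;v)$ and $\langle F'(v),\hat{w}_j\rangle = \langle F_j'(0;v),\hat{w}_j\rangle$. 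Using the definition of $d_j$ to eliminate $\langle F_j'(0;v),\hat{w}_j\rangle$, the right-hand side rearranges exactly into $E_j(0;v) - E_j(\hat{w}_j;v) + (1-\omega)\,d_j(\hat{w}_j;v)$, which reduces the claim to a statement about the local problem alone.

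The crux is a lower bound for $E_j(0;v) - E_j(\hat{w}_j;v)$, and I expect this to be the main obstacle, since it is exactly the step where the nonsmoothness of $G_j$ must be handled (this is the extension of the smooth optimality condition $F_j'(\hat{w}_j;v)=0$ used in~\cite{PX:2024}). I would exploit minimality of $\hat{w}_j$ through a directional-derivative argument toward the origin: for $t \in (0,1]$ set $\phi(t) = E_j((1-t)\hat{w}_j;v)$, so that $\phi(t) \ge \phi(0)$. The G\^{a}teaux differentiability of $F_j$ gives that the $F_j$-difference quotient tends to $-\langle F_j'(\hat{w}_j;v),\hat{w}_j\rangle$, while convexity of $G_j$ yields $G_j((1-t)\hat{w}_j;v) - G_j(\hat{w}_j;v) \le t\,(G_j(0;v) - G_j(\hat{w}_j;v))$. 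Dividing by $t$ and letting $t \to 0^+$ produces the variational inequality
\[
G_j(0;v) - G_j(\hat{w}_j;v) \ge \langle F_j'(\hat{w}_j;v),\hat{w}_j\rangle .
\]

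Combining this inequality with the definition of $d_j$ (so that $F_j(0;v)-F_j(\hat{w}_j;v) = -d_j(\hat{w}_j;v) - \langle F_j'(0;v),\hat{w}_j\rangle$ and $d_j'(\cdot;v) = F_j'(\cdot;v) - F_j'(0;v)$) gives
\[
E_j(0;v) - E_j(\hat{w}_j;v) \ge \langle d_j'(\hat{w}_j;v),\hat{w}_j\rangle - d_j(\hat{w}_j;v).
\]
Substituting this into the localized gap yields the intermediate estimate $E(v) - E(v+\hat{w}_j) \ge \langle d_j'(\hat{w}_j;v),\hat{w}_j\rangle - \omega\, d_j(\hat{w}_j;v)$. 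Finally I invoke the definition~\eqref{rho} of $\rho$ as an infimum, which provides $d_j(\hat{w}_j;v) \le \frac{1}{\rho}\langle d_j'(\hat{w}_j;v),\hat{w}_j\rangle$; since $\omega > 0$, this turns $-\omega\, d_j(\hat{w}_j;v)$ into $-\frac{\omega}{\rho}\langle d_j'(\hat{w}_j;v),\hat{w}_j\rangle$ and delivers the claimed factor $\bigl(1-\frac{\omega}{\rho}\bigr)\langle d_j'(\hat{w}_j;v),\hat{w}_j\rangle$. The concluding nonnegativity then follows because $\omega \le \rho$ forces $1-\frac{\omega}{\rho} \ge 0$, while monotonicity of $F_j'$ (from convexity of $F_j$) gives $\langle d_j'(\hat{w}_j;v),\hat{w}_j\rangle \ge 0$. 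Throughout I would be careful to record that $d_j \ge 0$, so that scaling by $\omega$ and applying the $\rho$-bound preserve the inequality directions.
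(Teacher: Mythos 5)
Your proposal is correct and follows essentially the same route as the paper's proof: both arguments rest on the first-order optimality condition of $\hat{w}_j$ tested at $w_j = 0$, combined with the consistency and stability parts of \cref{Ass:local} and the definition~\eqref{rho} of $\rho$ to convert $d_j(\hat{w}_j;v)$ into $\tfrac{1}{\rho}\langle d_j'(\hat{w}_j;v),\hat{w}_j\rangle$. The only differences are organizational: you group the terms through the local energy $E_j(0;v)-E_j(\hat{w}_j;v)$ rather than splitting the gap into separate $F$- and $G$-estimates, and you derive the variational inequality explicitly via a directional-derivative argument where the paper simply states it.
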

\begin{proof}
The optimality condition for \( \hat{w}_j \) reads as
\begin{equation}
\label{Lem1:local_sufficient_decrease}
G_j(w_j; v) - G_j(\hat{w}_j; v) \geq \langle F_j'(\hat{w}_j; v), \hat{w}_j - w_j \rangle,
\quad w_j \in V_j.
\end{equation}
In particular, for \( w_j = 0 \), we obtain
\begin{equation}
\label{Lem2:local_sufficient_decrease}
\begin{split}
G(v) - G(v + \hat{w}_j) 
&\geq G_j(0; v) - G_j(\hat{w}_j; v) \\
&\stackrel{\eqref{Lem1:local_sufficient_decrease}}{\geq} \langle F_j'(\hat{w}_j; v), \hat{w}_j \rangle \\
&= \langle F'(v), \hat{w}_j \rangle + \langle d_j'(\hat{w}_j; v), \hat{w}_j \rangle,
\end{split}
\end{equation}
where the first inequality follows from \cref{Ass:local}(b,c).
On the other hand, by \cref{Ass:local}(c), we have
\begin{equation}
\label{Lem3:local_sufficient_decrease}
\begin{split}
F(v) - F(v + \hat{w}_j)
&= - \langle F'(v), \hat{w}_j \rangle - d(\hat{w}_j; v) \\
&\geq - \langle F'(v), \hat{w}_j \rangle - \omega d_j(\hat{w}_j; v) \\
&\stackrel{\eqref{rho}}{\geq} - \langle F'(v), \hat{w}_j \rangle - \frac{\omega}{\rho} \langle d_j'(\hat{w}_j; v), \hat{w}_j \rangle.
\end{split}
\end{equation}
Summing~\eqref{Lem2:local_sufficient_decrease} and~\eqref{Lem3:local_sufficient_decrease} completes the proof.
\end{proof}

As a corollary, the energy sequence generated by \cref{Alg:RSC} decreases monotonically; see \cref{Cor:decrease}.

\begin{corollary}
\label{Cor:decrease}
Suppose that \cref{Ass:local} holds.
In the randomized subspace correction method~(\cref{Alg:RSC}), the sequence $\{ E (u^{(n)}) \}$ is decreasing.
\end{corollary}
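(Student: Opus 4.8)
The plan is to reduce the claim to a direct, per-iteration application of \cref{Lem:local_sufficient_decrease}. Fix $n \geq 0$ and condition on the index $j \in [J]$ sampled at step $n$ of \cref{Alg:RSC}. By construction of the algorithm, the update is $u^{(n+1)} = u^{(n)} + w_j^{(n+1)}$, where $w_j^{(n+1)}$ is an exact minimizer of the local functional $E_j(\cdot; u^{(n)})$ over $V_j$. This is precisely the hypothesis of \cref{Lem:local_sufficient_decrease} with $v = u^{(n)}$ and $\hat{w}_j = w_j^{(n+1)}$.

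First I would invoke the lemma to obtain
\begin{equation*}
E(u^{(n)}) - E(u^{(n+1)}) \geq \left( 1 - \frac{\omega}{\rho} \right) \langle d_j'(w_j^{(n+1)}; u^{(n)}), w_j^{(n+1)} \rangle \geq 0,
\end{equation*}
so that $E(u^{(n+1)}) \leq E(u^{(n)})$. The second inequality is exactly the nonnegativity already asserted in \cref{Lem:local_sufficient_decrease}; it rests on the fact that $\rho \geq 1$ together with $\omega < \rho$ (which is guaranteed by the stability condition $\omega \in (0,1] \cup (1,\rho)$, so that $1 - \omega/\rho \geq 0$) and on the sign of $\langle d_j'(\hat{w}_j; v), \hat{w}_j \rangle$. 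Thus nothing beyond the lemma is required for the per-step estimate.

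The only point meriting a word of care is that the asserted monotonicity is a deterministic, pathwise statement: the displayed inequality holds for \emph{every} admissible realization of the sampled index $j$, since the decrease guaranteed by \cref{Lem:local_sufficient_decrease} is uniform over all $j \in [J]$. Consequently it holds for the actual random choice made at step $n$, and hence for every sample path of the algorithm, so that no expectation or conditioning argument is needed. Iterating over $n = 0, 1, 2, \dots$ then yields that $\{ E(u^{(n)}) \}$ is nonincreasing, which is the assertion. There is essentially no technical obstacle here, as all of the substantive work was carried out in \cref{Lem:local_sufficient_decrease}.
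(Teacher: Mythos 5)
Your proof is correct and follows exactly the route the paper intends: the corollary is an immediate pathwise consequence of \cref{Lem:local_sufficient_decrease} applied with $v = u^{(n)}$ and $\hat{w}_j = w_j^{(n+1)}$, valid for every realization of the sampled index, and the paper offers no additional argument beyond this. Your remark that no expectation or conditioning is needed is a worthwhile clarification but does not constitute a different approach.
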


In \cref{Lem:ASM}, we present a refined version~(cf.~\cite[Lemma~2]{PX:2024}) of the generalized additive Schwarz lemma for the composite optimization problem~\eqref{model}, originally introduced in~\cite[Lemma~4.5]{Park:2020}.

\begin{lemma}
\label{Lem:ASM}
Suppose that \cref{Ass:local}(a,b) holds.
For $v \in V$, we have
\begin{equation}
\label{Lem1:ASM}
\hat{w} := \sum_{j=1}^J \hat{w}_j \in \operatornamewithlimits{\arg\min}_{w \in V} \left\{ \langle F'(v), w \rangle + \inf_{w = \sum_{j=1}^J w_j} \sum_{j=1}^J (d_j + G_j)(w_j; v) \right\},
\end{equation}
where $\hat{w}_j$, $j \in [J]$, were given in~\eqref{w_hat}.
Moreover, we have
\begin{equation}
\label{Lem2:ASM}
\inf_{w = \sum_{j=1}^J w_j} \sum_{j=1}^J (d_j + G_j)(w_j; v)
= \sum_{j=1}^J (d_j + G_j)(\hat{w}_j; v).
\end{equation}
\end{lemma}
\begin{proof}
We closely follow the argument in the proof of~\cite[Lemma~4.2]{LP:2025b}.
Throughout the proof, we define
\begin{equation*}
d_{\mathrm{PSC}} (w; v) = \inf_{w = \sum_{j=1}^J w_j} \sum_{j=1}^J (d_j + G_j)(w_j; v),
\quad w \in V.
\end{equation*}
Let $w \in V$ be arbitrary.
For any $w_j \in V_j$, $j \in [J]$, such that $w = \sum_{j=1}^J w_j$, we have
\begin{equation}
\label{Lem3:ASM}
\begin{split}
\langle F'(v), \hat{w} \rangle + d_{\mathrm{PSC}} (\hat{w}; v)
&\leq \sum_{j=1}^J \left( \langle F'(v), \hat{w}_j \rangle + (d_j + G_j)(\hat{w}_j; v) \right) \\
&\stackrel{\eqref{w_hat}}{\leq} \sum_{j=1}^J \left( \langle F'(v), w_j \rangle + (d_j + G_j)(w_j; v) \right) \\
&= \langle F'(v), w \rangle + \sum_{j=1}^J (d_j + G_j)(w_j; v).
\end{split}
\end{equation}
Here, the first inequality and the last equality follow from the fact that
$\hat{w} = \sum_{j=1}^J \hat{w}_j \in \sum_{j=1}^J V_j$.
By minimizing the last line of~\eqref{Lem3:ASM} over all decompositions $(w_j)_{j=1}^J$, we obtain
\begin{equation}
\label{Lem4:ASM}
\begin{split}
\langle F'(v), \hat{w} \rangle + d_{\mathrm{PSC}} (\hat{w}; v)
&\leq \sum_{j=1}^J \left( \langle F'(v), \hat{w}_j \rangle + (d_j + G_j)(\hat{w}_j; v) \right) \\
&\leq \langle F'(v), w \rangle + d_{\mathrm{PSC}} (w; v),
\end{split}
\end{equation}
which implies~\eqref{Lem1:ASM}.
Finally, setting $w = \hat{w}$ in~\eqref{Lem4:ASM} yields~\eqref{Lem2:ASM}.
\end{proof}

\cref{Lem:ASM} shows that, to analyze the convergence rate of the parallel subspace correction method~(\cref{Alg:PSC}), it suffices to estimate the following quantity~\cite{LP:2025b,PX:2024}:
\begin{equation}
\label{Psi}
\Psi(u^{(n)}) :=
\min_{w \in V} \left\{ \langle F'(u^{(n)}), w \rangle + \inf_{w = \sum_{j=1}^J w_j} \sum_{j=1}^J ( d_j + G_j )(w_j; u^{(n)}) \right\} - J G(u^{(n)}).
\end{equation}

In the randomized subspace correction method~(\cref{Alg:RSC}), the update at each iteration is determined by a randomly chosen subspace.
Consequently, the total energy after one step, $E(u^{(n+1)})$, is a random variable depending on the sampling of the index $j \in [J]$.
To analyze its expected descent behavior, we consider the conditional expectation of the energy $\mathbb{E} [ E (u^{(n+1)}) \mid u^{(n)} ]$, which represents the expected value of the energy at the next iteration given the current iterate $u^{(n)}$.
Here, the roman font $E$ denotes the objective functional, while $\mathbb{E}$ denotes expectation.
In \cref{Thm:decrease}, we show that the conditional expectation $\mathbb{E} [ E (u^{(n+1)}) \mid u^{(n)} ]$ can be estimated using~\eqref{Psi}, indicating that its analysis can proceed along similar lines as that of the parallel method.

\begin{theorem}
\label{Thm:decrease}
Suppose that \cref{Ass:local} holds.
In the randomized subspace correction method~(\cref{Alg:RSC}), we have
\begin{equation*}
\mathbb{E} [ E (u^{(n+1)}) \mid u^{(n)} ] \leq E (u^{(n)}) + \frac{\theta}{J} \Psi (u^{(n)}), \quad n \geq 0,
\end{equation*}
where $\Psi (u^{(n)})$ was given in~\eqref{Psi}, and the constant $\theta$ is given by
\begin{equation}
\label{theta}
\theta = \begin{cases}
1, \quad & \text{ if } \omega \in (0, 1], \\
\frac{\rho - \omega}{\rho - 1}, \quad & \text{ if } \omega \in (1, \rho).
\end{cases}
\end{equation}
\end{theorem}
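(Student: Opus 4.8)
The plan is to condition on $u^{(n)}$, average over the uniform draw of $j$, and thereby reduce the claim to a deterministic per-subspace estimate. Since the update in \cref{Alg:RSC} sets $u^{(n+1)} = u^{(n)} + \hat{w}_j$ with probability $1/J$ for each $j$, where $\hat{w}_j$ is the local minimizer in \eqref{w_hat}, I have
\[
\mathbb{E}\!\left[ E(u^{(n+1)}) \mid u^{(n)} \right] = \frac{1}{J}\sum_{j=1}^J E(u^{(n)} + \hat{w}_j).
\]
It therefore suffices to prove, for each fixed $j$, the one-step bound $E(u^{(n)} + \hat{w}_j) - E(u^{(n)}) \le \theta\,\psi_j$, where $\psi_j := \langle F'(u^{(n)}), \hat{w}_j\rangle + (d_j + G_j)(\hat{w}_j; u^{(n)}) - G(u^{(n)})$ is the $j$th summand obtained when $\Psi(u^{(n)})$ is split across subspaces. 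Summing over $j$ and invoking \cref{Lem:ASM}, which identifies the minimizer of \eqref{Psi} as $\hat{w} = \sum_{j=1}^J \hat{w}_j$ and hence gives the additive identity $\Psi(u^{(n)}) = \sum_{j=1}^J \psi_j$, then yields the theorem.

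For the per-subspace estimate I would first expand $E(u^{(n)} + \hat{w}_j) - E(u^{(n)})$ using the definition of the Bregman divergence $d$ together with the splitting $E = F + G$, and then apply the stability bounds of \cref{Ass:local}(c), namely $d(\hat{w}_j; u^{(n)}) \le \omega\, d_j(\hat{w}_j; u^{(n)})$ and $G(u^{(n)} + \hat{w}_j) \le G_j(\hat{w}_j; u^{(n)})$. Together with the consistency identities of \cref{Ass:local}(b), this produces the clean intermediate bound
\[
E(u^{(n)} + \hat{w}_j) - E(u^{(n)}) \le \psi_j + (\omega - 1)\, d_j(\hat{w}_j; u^{(n)}),
\]
so that the whole problem reduces to controlling the correction term $(\omega - 1)\, d_j(\hat{w}_j; u^{(n)})$ in the two regimes for $\omega$.

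When $\omega \in (0, 1]$, the factor $\omega - 1$ is nonpositive while $d_j(\hat{w}_j; u^{(n)}) \ge 0$ by the convexity of $F_j(\cdot; u^{(n)})$ in \cref{Ass:local}(a); hence the correction term is nonpositive and the bound closes immediately with $\theta = 1$. The harder regime is $\omega \in (1, \rho)$, where the correction term is nonnegative and must be absorbed back into $\psi_j$. Here I would reuse the optimality condition \eqref{Lem1:local_sufficient_decrease} evaluated at $w_j = 0$ from the proof of \cref{Lem:local_sufficient_decrease}, which bounds $G_j(\hat{w}_j; u^{(n)}) - G(u^{(n)})$ above by $-\langle F'(u^{(n)}), \hat{w}_j\rangle - \langle d_j'(\hat{w}_j; u^{(n)}), \hat{w}_j\rangle$. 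Substituting this into $\psi_j$ and then invoking the definition of $\rho$ in \eqref{rho} gives the crucial sign-controlled estimate $\psi_j \le -(\rho - 1)\, d_j(\hat{w}_j; u^{(n)})$. Dividing by $\rho - 1 > 0$ and multiplying by $\omega - 1 > 0$ converts this into $(\omega - 1)\, d_j(\hat{w}_j; u^{(n)}) \le (\theta - 1)\,\psi_j$ via the algebraic identity $\theta - 1 = (1 - \omega)/(\rho - 1)$ read off from \eqref{theta}; combined with the intermediate bound above, this yields exactly $E(u^{(n)} + \hat{w}_j) - E(u^{(n)}) \le \theta\,\psi_j$.

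I expect the main obstacle to be the case $\omega \in (1, \rho)$, specifically establishing the estimate $\psi_j \le -(\rho - 1)\, d_j(\hat{w}_j; u^{(n)})$ that lets the positive correction term be folded back into $\Psi(u^{(n)})$. The point is that $\rho$ in \eqref{rho} measures precisely the gap between $\langle d_j'(\hat{w}_j; u^{(n)}), \hat{w}_j\rangle$ and $d_j(\hat{w}_j; u^{(n)})$, which is exactly the reserve needed to offset the loss incurred by a stability factor $\omega > 1$. Once the per-subspace inequality $E(u^{(n)} + \hat{w}_j) - E(u^{(n)}) \le \theta\,\psi_j$ is in hand for both regimes, the averaging step and \cref{Lem:ASM} complete the proof with no further difficulty.
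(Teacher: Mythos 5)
Your proposal is correct, and its skeleton coincides with the paper's proof: average over the uniform draw of $j$, use \cref{Ass:local}(b,c) to bound each $E(u^{(n)}+\hat w_j)$ by the corresponding local surrogate, and invoke \cref{Lem:ASM} to identify the sum with $\Psi(u^{(n)})$; the case $\omega\in(0,1]$ is handled identically. The only genuine difference is how the case $\omega\in(1,\rho)$ is closed. The paper keeps the excess term $\frac{\omega-1}{J}\sum_j d_j(w_j^{(n+1)};u^{(n)})$ aggregated, bounds it via \cref{Lem:local_sufficient_decrease} by $\frac{\omega-1}{\rho-\omega}\bigl(E(u^{(n)})-\mathbb{E}[E(u^{(n+1)})\mid u^{(n)}]\bigr)$, and then solves the resulting implicit inequality for the conditional expectation, which is where the factor $\theta=\frac{\rho-\omega}{\rho-1}$ emerges. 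You instead absorb the excess subspace by subspace: from the optimality condition \eqref{Lem1:local_sufficient_decrease} at $w_j=0$ and the definition \eqref{rho} of $\rho$ you get $\psi_j\le d_j(\hat w_j;u^{(n)})-\langle d_j'(\hat w_j;u^{(n)}),\hat w_j\rangle\le-(\rho-1)\,d_j(\hat w_j;u^{(n)})$, hence $(\omega-1)d_j\le(\theta-1)\psi_j$, yielding the per-subspace bound $E(u^{(n)}+\hat w_j)-E(u^{(n)})\le\theta\psi_j$ before averaging. Both routes use exactly the same ingredients (the local optimality condition and the constant $\rho$), but yours is explicit rather than self-referential and gives the slightly stronger deterministic one-step statement as a byproduct; the paper's version reuses \cref{Lem:local_sufficient_decrease} verbatim and so is marginally shorter on the page.
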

\begin{proof}
Fix any \( n \geq 0 \).
For each \( j \in [J] \), let \( w_j^{(n+1)} \in V_j \) be a minimizer of \( E_j(w_j; u^{(n)}) \).
We first consider the case \( \omega \in (0, 1] \).
It follows that
\begin{equation}
\label{Thm1:decrease}
\begin{split}
\mathbb{E}[ E(u^{(n+1)}) \mid u^{(n)} ]
&= \frac{1}{J} \sum_{j=1}^J E(u^{(n)} + w_j^{(n+1)}) \\
&\stackrel{\text{(i)}}{\leq} F(u^{(n)}) + \frac{1}{J} \sum_{j=1}^J \left[ \langle F'(u^{(n)}), w_j^{(n+1)} \rangle + (d_j + G_j)(w_j^{(n+1)}; u^{(n)}) \right] \\
&\stackrel{\text{(ii)}}{=} E(u^{(n)}) + \frac{1}{J} \Psi(u^{(n)}),
\end{split}
\end{equation}
which is the desired result. Here, (i) follows from \cref{Ass:local}(c), and (ii) follows from \cref{Lem:ASM}.

Now consider the case \( \omega \in (1, \rho) \). Proceeding similarly as in~\eqref{Thm1:decrease}, we obtain
\begin{equation}
\label{Thm2:decrease}
\mathbb{E}[ E(u^{(n+1)}) \mid u^{(n)} ]
\leq E(u^{(n)}) 
+ \frac{1}{J} \Psi(u^{(n)})
+ \frac{\omega - 1}{J} \sum_{j=1}^J d_j(w_j^{(n+1)}; u^{(n)}).
\end{equation}
Meanwhile, from~\eqref{rho} and \cref{Lem:local_sufficient_decrease}, it follows that
\begin{equation}
\label{Thm3:decrease}
\begin{split}
\frac{1}{J} \sum_{j=1}^J d_j(w_j^{(n+1)}; u^{(n)})
&\leq \frac{1}{J \rho} \sum_{j=1}^J \langle d_j'(w_j^{(n+1)}; u^{(n)}), w_j^{(n+1)} \rangle \\
&\leq \frac{1}{\rho - \omega} \left( E(u^{(n)}) - \mathbb{E}[ E(u^{(n+1)}) \mid u^{(n)} ] \right).
\end{split}
\end{equation}
Combining~\eqref{Thm2:decrease} and~\eqref{Thm3:decrease} yields the desired result.
\end{proof}

\begin{remark}
\label{Rem:nonuniform_sampling}
The randomized subspace correction method in \cref{Alg:RSC} can be extended to nonuniform sampling, as considered in, e.g.,~\cite{Nesterov:2012,QR:2014,RT:2016}.
Namely, given a probability distribution $\boldsymbol{\gamma}=\{\gamma_1,\dots,\gamma_J\}$, each subspace index $j$ ($j \in [J]$) is selected with probability $\gamma_j$, where $\gamma_j \in (0,1)$ and $\sum_{j=1}^J \gamma_j = 1$.
In this case, using arguments similar to those in \cref{Lem:ASM} and \cref{Thm:decrease}, we obtain the following counterpart of \cref{Thm:decrease}:
\begin{equation*}
    \mathbb{E}[E(u^{(n+1)}) \mid u^{(n)}] \leq E(u^{(n)}) + \frac{\theta}{J} \Psi_{\boldsymbol{\gamma}}(u^{(n)}),
    \quad n \geq 0,
\end{equation*}
where $\Psi_{\boldsymbol{\gamma}}(u^{(n)})$ is given by (cf.~\eqref{Psi})
\begin{equation*}
\resizebox{\textwidth}{!}{$\displaystyle
    \Psi_{\boldsymbol{\gamma}}(u^{(n)}) = \min_{w \in V} \left\{ \langle F'(u^{(n)}), w \rangle + \inf_{w = \sum_{j=1}^J \gamma_j J w_j} \sum_{j=1}^J \gamma_j J (d_j + G_j)(w_j; u^{(n)}) \right\} - JG(u^{(n)}).
    $}
\end{equation*}
\end{remark}
Such nonuniform sampling is particularly beneficial when the contributions of the $(d_j + G_j)$-terms vary significantly across $j \in [J]$.
In this case, one may choose smaller $\gamma_j$ for indices with larger contributions of $(d_j + G_j)$ to control $\Psi_{\boldsymbol{\gamma}}(u^{(n)})$; see, e.g.,~\cite{AQRY:2016} for specific sampling strategies and the resulting improvements.

\section{Convergence theorems}
\label{Sec:Convergence}
In this section, we present convergence theorems for the randomized subspace correction method~(\cref{Alg:RSC}) under various conditions on the energy functional~$E$.
We note that the convergence analysis of \cref{Alg:RSC} given in this section relies on existing results for the parallel subspace correction method~(\cref{Alg:PSC})~\cite{LP:2025b,Park:2020}. 
In particular, convergence rates of \cref{Alg:PSC} have been established in~\cite[Theorem~4.7]{Park:2020} for general convex problems and in~\cite[Theorem~4.8]{Park:2020} for sharp~(cf.~\cref{Ass:sharp}) convex problems.

Given the initial iterate $u^{(0)} \in \operatorname{dom} G$ of \cref{Alg:RSC}, we define
\begin{equation}
\label{K_0}
K_0 = \{ v \in V : E (v) \leq E (u^{(0)}) \}, \quad
R_0 = \sup_{v \in K_0} \| v - u \|.
\end{equation}
The convexity and coercivity of $E$ imply that $K_0$ is bounded and convex, and in particular, $R_0 < \infty$.
Moreover, by \cref{Cor:decrease}, the sequence $\{ u^{(n)} \}$ generated by \cref{Alg:RSC} remains entirely within $K_0$.

\cref{Thm:decrease} implies that, to estimate the convergence rate of \cref{Alg:RSC}, it suffices to estimate $\Psi(u^{(n)})$ defined in~\eqref{Psi}.
The form of $\Psi(u^{(n)})$ naturally motivates the following stable decomposition assumption~(cf.~\cite[Assumption~4.1]{Park:2020}).

\begin{assumption}[stable decomposition]
\label{Ass:stable}
For some $q > 1$, the following holds: for any bounded convex subset $K$ of $V$, we have
\begin{subequations}
\label{C_K}
\begin{equation}
C_K := q \sup_{v, v+w \in K} \inf \frac{\sum_{j=1}^J d_j (w_j; v)}{\| w \|^q}   < \infty,
\end{equation}
where the infimum is taken over $w_j \in V_j$, $j \in [J]$, satisfying
\begin{equation}
\label{C_K_infimum}
w = \sum_{j=1}^J w_j, \quad
\sum_{j=1}^J G_j (w_j; v) \leq G(v+w) + (J-1) G(v).
\end{equation}
\end{subequations}
\end{assumption}

Examples of stable decompositions satisfying \cref{Ass:stable} will be provided in \cref{Sec:Related,Sec:Applications}; see also~\cite[Section~6]{Park:2020}.
A notable observation made in~\cite[Lemma~3]{PX:2024} is that, in the case of smooth problems, i.e., when $G = 0$ in~\eqref{model} and $G_j = 0$, $j \in [J]$, in~\eqref{local_inexact}, \cref{Ass:stable} need not be assumed, but instead holds automatically under a mild smoothness condition on each $d_j$; see \cref{Prop:stable}.

\begin{proposition}
\label{Prop:stable}
In the case of smooth problems, i.e., when $G = 0$ in~\eqref{model} and $G_j = 0$, $j \in [J]$, in~\eqref{local_inexact}, suppose that \cref{Ass:local} holds.
Furthermore, assume that for some $q > 1$, the following holds: for any bounded convex subsets $K \subset V$ and $K_j \subset V_j$ with $0 \in K_j$, we have
\begin{equation*}
\sup_{v \in K,\ w_j \in K_j} \frac{d_j (w_j; v)}{\| w_j \|^q} < \infty.
\end{equation*}
Then we have
\begin{equation*}
C_{K} = q \sup_{v,\, v+w \in K} \inf_{w = \sum_{j=1}^J w_j} \frac{\sum_{j=1}^J d_j (w_j; v)}{\| w \|^q} < \infty.
\end{equation*}
\end{proposition}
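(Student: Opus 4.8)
The plan is to combine the intrinsic stable decomposition property~\eqref{norm_stable} of the space decomposition with the local smoothness hypothesis of the proposition. First I would note that, since $G = 0$ and $G_j = 0$, the infimum in the definition of $C_K$ is taken over \emph{all} decompositions $w = \sum_{j=1}^J w_j$ with $w_j \in V_j$ and no side constraint, and that each $d_j(\cdot; v)$ is nonnegative because $F_j(\cdot; v)$ is convex and G\^{a}teaux differentiable by \cref{Ass:local}(a). Fixing the exponent $q > 1$ from the hypothesis and invoking~\eqref{norm_stable}, I would extract a finite constant $C_1$ such that, by homogeneity (using near-optimal decompositions to approximate the supremum), every $w \in V$ admits a decomposition $w = \sum_{j=1}^J w_j$ satisfying $\sum_{j=1}^J \| w_j \|^q \leq C_1 \| w \|^q$.

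The key step is then to use the boundedness of $K$ to promote the pointwise local smoothness bound to a uniform one over $K$. For any $v, v+w \in K$, the increment obeys $\| w \| \leq D := \operatorname{diam}(K) < \infty$, so the decomposition above gives $\| w_j \| \leq C_1^{1/q} D$ for every $j \in [J]$. Consequently all relevant local components lie in the single fixed ball $K_j := \{ w_j \in V_j : \| w_j \| \leq C_1^{1/q} D \}$, which is bounded, convex, and contains $0$. Applying the hypothesis to each pair $(K, K_j)$ yields finite constants
\[
M_j := \sup_{v \in K,\ w_j \in K_j} \frac{d_j(w_j; v)}{\| w_j \|^q} < \infty, \qquad j \in [J],
\]
and I set $M := \max_{j \in [J]} M_j < \infty$.

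With this in hand the estimate assembles directly: for the chosen decomposition,
\[
\sum_{j=1}^J d_j(w_j; v) \leq M \sum_{j=1}^J \| w_j \|^q \leq M C_1 \| w \|^q,
\]
so the infimum over decompositions is bounded by $M C_1$, and taking the supremum over $v, v+w \in K$ and multiplying by $q$ gives $C_K \leq q M C_1 < \infty$. The degenerate case $w = 0$ is handled by the trivial decomposition $w_j = 0$, which contributes $0$ under the convention $0/0 = 0$ for arguments of a supremum. I expect the main obstacle to be precisely the uniformity argument of the second paragraph: one must guarantee that the local components produced by~\eqref{norm_stable} \emph{simultaneously} lie in one fixed admissible set $K_j$, so that a single constant $M_j$ from the hypothesis applies across the entire supremum over $K$. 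Once the a priori bound $\| w_j \| \leq C_1^{1/q} D$ is established from the boundedness of $K$ together with~\eqref{norm_stable}, the remaining manipulations are routine.
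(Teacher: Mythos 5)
Your proposal is correct. The paper itself does not spell out a proof but defers to the proof of Lemma~4.10 in the cited reference, and your argument is precisely the standard one that result rests on: use the norm-stable decomposition~\eqref{norm_stable} (with near-optimal decompositions, since the infimum need not be attained) to produce components $w_j$ confined to a single bounded ball $K_j$ determined by $\operatorname{diam}(K)$, then invoke the uniform local bound on each pair $(K, K_j)$ to control $\sum_{j=1}^J d_j(w_j; v)$ by $\| w \|^q$. The only point worth tightening is that the constant $C_1$ should be taken as the $q$th power of the supremum in~\eqref{norm_stable} plus an arbitrary $\varepsilon > 0$ (or one lets $\varepsilon \to 0$ at the end), which you already flag; otherwise the estimate $C_K \leq q M C_1$ is exactly as expected.
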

\begin{proof}
This result is a special case of~\cite[Lemma~4.10]{LP:2025b}, which relies on the stable decomposition property~\eqref{norm_stable}.
\end{proof}

The following lemma provides a preliminary estimate for $\Psi(u^{(n)})$ under \cref{Ass:stable}.
Although the proof follows a similar argument to that in~\cite[Appendix~A.3]{Park:2020}, we include it here for completeness.

\begin{lemma}
\label{Lem:Psi}
Suppose that \cref{Ass:stable} holds.
Then we have
\begin{equation}
\label{Lem1:Psi}
\resizebox{\textwidth}{!}{$\displaystyle
\Psi (u^{(n)}) \leq \min_{t \in [0, 1]} \bigg\{ t \langle F'(u^{(n)}), u - u^{(n)} \rangle + \frac{C_{K_0}}{q} t^q \| u - u^{(n)} \|^q
+ G ( (1-t) u^{(n)} + t u ) \bigg\} - G(u^{(n)}),\ n \geq 0,
$}
\end{equation}
where $\Psi (u^{(n)})$, $K_0$, and $C_{K_0}$  were given in~\eqref{Psi},~\eqref{C_K}, and~\eqref{K_0}, respectively.
\end{lemma}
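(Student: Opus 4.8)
The plan is to exploit the defining minimization of $\Psi(u^{(n)})$ in~\eqref{Psi}: since $\Psi(u^{(n)})$ is a minimum over $w \in V$, evaluating the bracketed expression at any single choice of $w$ produces an upper bound. Writing $v = u^{(n)}$ for brevity, I would test the one-parameter family $w = t(u-v)$ with $t \in [0,1]$, so that $v + w = (1-t)u^{(n)} + tu$ is a convex combination of the current iterate and the minimizer $u$. The first step is to check that this point lies in $K_0$, which is needed to invoke \cref{Ass:stable}: we have $u^{(n)} \in K_0$ by \cref{Cor:decrease} and $u \in K_0$ since $E(u) \leq E(u^{(0)})$, so the convexity of $K_0$ gives $v + w \in K_0$ for every $t \in [0,1]$.

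Next I would estimate the inner infimum in~\eqref{Psi} for this fixed $w$. Because that infimum ranges over all decompositions $w = \sum_{j} w_j$, it is bounded above by the same infimum restricted to decompositions obeying the $G$-constraint in~\eqref{C_K_infimum}. For any such constrained decomposition, $\sum_{j} G_j(w_j; v) \leq G(v+w) + (J-1)G(v)$; splitting $\sum_{j} (d_j + G_j) = \sum_{j} d_j + \sum_{j} G_j$ and noting that the $G$ bound is independent of the decomposition, the remaining infimum of $\sum_{j} d_j(w_j; v)$ over constrained decompositions is controlled by $\frac{C_{K_0}}{q}\|w\|^q$ directly from the definition of $C_{K_0}$ in~\eqref{C_K}. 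Collecting these estimates gives
\begin{equation*}
\Psi(v) + J G(v) \leq \langle F'(v), w \rangle + \frac{C_{K_0}}{q}\|w\|^q + G(v+w) + (J-1)G(v).
\end{equation*}

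Cancelling $(J-1)G(v)$ against $J G(v)$ leaves $-G(v)$ on the right, and substituting back $w = t(u-v)$, together with $\|w\|^q = t^q\|u-u^{(n)}\|^q$ and $v+w = (1-t)u^{(n)} + tu$, reproduces the bracketed quantity in~\eqref{Lem1:Psi} for each fixed $t \in [0,1]$. Since the resulting inequality holds for every such $t$, taking the minimum over $t \in [0,1]$ finishes the proof.

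The only real subtlety lies in reading \cref{Ass:stable} correctly: the infimum defining $C_K$ is taken over decompositions that already satisfy the nonsmooth constraint~\eqref{C_K_infimum}, so a single decomposition must control the Bregman sum $\sum_{j} d_j$ and the functional sum $\sum_{j} G_j$ simultaneously. Respecting this coupling---rather than bounding the two sums by unrelated decompositions---is exactly what makes the $(J-1)G(v)$ term appear and cancel cleanly, leaving the step-size-free factor $C_{K_0}/q$ in front of $t^q\|u-u^{(n)}\|^q$.
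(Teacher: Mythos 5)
Your proposal is correct and follows essentially the same route as the paper's proof: restrict the outer minimum in~\eqref{Psi} to $w$ with $u^{(n)}+w\in K_0$, invoke \cref{Ass:stable} to bound the coupled infimum by $\frac{C_{K_0}}{q}\|w\|^q + G(u^{(n)}+w)+(J-1)G(u^{(n)})$, and then substitute $w=t(u-u^{(n)})$. Your added remarks on the membership $v+w\in K_0$ and on respecting the coupling in~\eqref{C_K_infimum} are accurate elaborations of steps the paper leaves implicit.
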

\begin{proof}
From the definition~\eqref{Psi} of $\Psi(u^{(n)})$, we have
\begin{equation}
\label{Lem2:Psi}
\resizebox{\textwidth}{!}{$\displaystyle
\begin{aligned}
\Psi(u^{(n)})
&= \min_{w \in V} \left\{ \langle F'(u^{(n)}), w \rangle + \inf_{w = \sum_{j=1}^J w_j} \sum_{j=1}^J ( d_j + G_j )(w_j; u^{(n)}) \right\} - J G(u^{(n)}) \\
&\leq \min_{u^{(n)} + w \in K_0} \left\{ \langle F'(u^{(n)}), w \rangle + \inf_{w = \sum_{j=1}^J w_j} \sum_{j=1}^J (d_j + G_j)(w_j; u^{(n)}) \right\} - J G(u^{(n)}) \\
&\leq \min_{u^{(n)} + w \in K_0} \left\{ \langle F'(u^{(n)}), w \rangle + \frac{C_{K_0}}{q} \| w \|^q + G(u^{(n)} + w) \right\} - G(u^{(n)}),
\end{aligned}
$}
\end{equation}
where the last inequality follows from \cref{Ass:stable}.
The proof is complete upon replacing $w$ in the last line of~\eqref{Lem2:Psi} with $t(u - u^{(n)})$ for some $t \in [0, 1]$.
\end{proof}

If one can derive suitable bounds for the term
\begin{equation}
\label{Psi_tilde}
\tilde{\Psi}(t; u^{(n)}) := t \langle F'(u^{(n)}), u - u^{(n)} \rangle + G\big((1-t)u^{(n)} + tu\big) - G(u^{(n)}),
\quad t \in [0,1],
\end{equation}
then, in view of \cref{Lem:Psi}, one can proceed to obtain quantitative convergence bounds.
Sharper estimates for $\tilde{\Psi}(t; u^{(n)})$ can be established under stronger assumptions on $F$ and $G$.
Accordingly, we consider several such cases in the following subsections.

\begin{remark}
\label{Rem:Psi}
An improved estimate for $\Psi(u^{(n)})$ compared to that in \cref{Lem:Psi} can be obtained under a stronger assumption than \cref{Ass:stable}.
Suppose that the following \textit{global} stable decomposition condition holds:
\begin{equation}
\label{C_V}
C_V := q \sup_{v,\, v+w \in V} \inf \frac{\sum_{j=1}^J d_j(w_j; v)}{\| w \|^q} < \infty,
\end{equation}
where the infimum is taken over~\eqref{C_K_infimum}.
Under this global condition, the constraint $u^{(n)} + w \in K_0$ in~\eqref{Lem2:Psi} is no longer needed, and consequently, the restriction $t \in [0,1]$ in~\eqref{Lem1:Psi} can be relaxed to $t \geq 0$.
This improvement will be useful later in our analysis; see \cref{Rem:conv_sharp,Rem:conv_strong}.
\end{remark}

\subsection{General problems}
Without imposing additional assumptions on $F$ and $G$, we can still obtain the following upper bound for~\eqref{Psi_tilde} using the convexity of $F$ and $G$:
\begin{equation}
\label{Psi_tilde_bound}
\tilde{\Psi} (t; u^{(n)}) \leq - t ( E(u^{(n)}) - E(u)).
\end{equation}
By combining \cref{Thm:decrease}, \cref{Lem:Psi}, and~\eqref{Psi_tilde_bound}, we obtain the following convergence theorem for the randomized subspace correction method~(\cref{Alg:RSC}).

\begin{theorem}
\label{Thm:conv}
Suppose that \cref{Ass:local,Ass:stable} hold.
In the randomized subspace correction method~(\cref{Alg:RSC}), if $\zeta_0 := E (u^{(0)}) - E (u) > C_{K_0} R_0^q$, then
\begin{equation*}
    \mathbb{E}[E (u^{(1)})] - E (u) \leq \left(1 - \frac{\theta}{J} \left( 1 - \frac{1}{q} \right) \right) \zeta_0,
\end{equation*}
where $\theta$, $K_0$, $R_0$, and $C_{K_0}$ were given in~\eqref{theta}, \eqref{K_0}, and~\eqref{C_K}.
Otherwise, we have
\begin{equation*}
    \mathbb{E}[E (u^{(n)})] - E (u) \leq \frac{C}{ ( n + (C / \zeta_0)^{1/\beta} )^{\beta}},
    \quad n \geq 0,
\end{equation*}
where
\begin{equation*}
\beta = q-1, \quad
C = \left( \frac{J q}{\theta} \right)^{q-1} C_{K_0} R_0^q.
\end{equation*}
\end{theorem}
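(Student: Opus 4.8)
The plan is to reduce everything to a scalar recursion for the expected energy gap $\zeta_n := E(u^{(n)}) - E(u)$ and then to solve that recursion explicitly. First I would combine \cref{Thm:decrease} with \cref{Lem:Psi} and the convexity bound~\eqref{Psi_tilde_bound}. Since \cref{Cor:decrease} guarantees that every iterate of \cref{Alg:RSC} stays in $K_0$, we have $\| u - u^{(n)} \| \le R_0$ pathwise, and these ingredients combine into the pointwise estimate
\begin{equation*}
\Psi(u^{(n)}) \le \min_{t \in [0,1]} \left\{ -t \zeta_n + \frac{C_{K_0}}{q} t^q R_0^q \right\}.
\end{equation*}
The scalar objective $g(t) = -t \zeta_n + \frac{C_{K_0}}{q} t^q R_0^q$ has unconstrained minimizer $t^\star = (\zeta_n / (C_{K_0} R_0^q))^{1/(q-1)}$, and the dichotomy in the theorem is precisely whether $t^\star \ge 1$ or $t^\star < 1$.

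For the first assertion, the hypothesis $\zeta_0 > C_{K_0} R_0^q$ forces $t^\star > 1$, so $g$ is strictly decreasing on $[0,1]$ and attains its minimum at $t = 1$. This gives $\Psi(u^{(0)}) \le -\zeta_0 + \frac{C_{K_0}}{q} R_0^q \le -\left(1 - \frac{1}{q}\right)\zeta_0$, and substituting into \cref{Thm:decrease} (noting that $u^{(0)}$ is deterministic) yields the stated one-step contraction factor $1 - \frac{\theta}{J}\left(1 - \frac{1}{q}\right)$.

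For the second assertion, \cref{Cor:decrease} gives $\zeta_n \le \zeta_0 \le C_{K_0} R_0^q$ pathwise, so now $t^\star \le 1$ and the minimum is attained at $t^\star$. A direct substitution gives $g(t^\star) = -\left(1 - \frac{1}{q}\right) \zeta_n^{q/(q-1)} (C_{K_0} R_0^q)^{-1/(q-1)}$, so \cref{Thm:decrease} produces the conditional recursion
\begin{equation*}
\mathbb{E}\!\left[ \zeta_{n+1} \mid u^{(n)} \right] \le \zeta_n - c\, \zeta_n^{q/(q-1)}, \qquad c := \frac{\theta (q-1)}{J q\, (C_{K_0} R_0^q)^{1/(q-1)}}.
\end{equation*}
Taking full expectations and invoking Jensen's inequality for the convex map $x \mapsto x^{q/(q-1)}$ (convex because $q/(q-1) > 1$) turns this into the \emph{deterministic} recursion $a_{n+1} \le a_n - c\, a_n^{q/(q-1)}$ for $a_n := \mathbb{E}[\zeta_n]$, with $a_0 = \zeta_0$.

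It then remains to solve this recursion, for which I would use the substitution $b_n := a_n^{-1/\beta}$ with $\beta = q - 1$. Convexity of $x \mapsto x^{-1/\beta}$ gives $b_{n+1} - b_n \ge \frac{1}{\beta} a_n^{-1/\beta - 1}(a_n - a_{n+1}) \ge \frac{c}{\beta}$, where the last step uses $a_n - a_{n+1} \ge c\, a_n^{1 + 1/\beta}$ together with $q/(q-1) = 1 + 1/\beta$. Hence $b_n \ge \zeta_0^{-1/\beta} + \frac{c}{\beta} n$, i.e. $a_n \le \left( \zeta_0^{-1/\beta} + \frac{c}{\beta} n \right)^{-\beta}$. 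Factoring out $(c/\beta)^\beta$ rewrites this as $\left(\frac{\beta}{c}\right)^\beta \bigl( n + \frac{\beta}{c}\zeta_0^{-1/\beta} \bigr)^{-\beta}$, and a short check that $(\beta/c)^\beta = (Jq/\theta)^{q-1} C_{K_0} R_0^q = C$ and $\frac{\beta}{c}\zeta_0^{-1/\beta} = (C/\zeta_0)^{1/\beta}$ delivers exactly the claimed bound. The main obstacle is the passage from the stochastic conditional inequality to the deterministic recursion: this is where Jensen's inequality is essential, and it runs in the favorable direction precisely because the exponent $q/(q-1)$ exceeds one; the remaining work is optimizing a one-dimensional convex function and solving an elementary recursion.
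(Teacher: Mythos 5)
Your proposal is correct and follows essentially the same route as the paper: the paper likewise combines \cref{Lem:Psi} with~\eqref{Psi_tilde_bound} to obtain the pointwise bound~\eqref{Thm1:conv} and then defers the case split on the constrained minimizer, the passage to a deterministic recursion via the law of total expectation (where your Jensen step is the needed ingredient), and the $a_n^{-1/\beta}$ substitution to the cited argument of~\cite[Appendix~B.1]{LP:2024b}. You have simply written out in full the details the paper outsources, and the constants check out.
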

\begin{proof}
We write $\zeta_n = E(u^{(n)}) - E(u)$.
Combining \cref{Lem:Psi} and~\eqref{Psi_tilde_bound} yields
\begin{equation}
\label{Thm1:conv}
\Psi (u^{(n)}) \leq \min_{t \in [0,1]} \left\{ -t \zeta_n + \frac{t^q C_{K_0}}{q} \| u - u^{(n)} \|^q \right\}.
\end{equation}
Applying the argument in~\cite[Equation~(B.2)]{LP:2025b}, we obtain
\begin{equation}
\label{Thm2:conv}
\Psi (u^{(n)})
\leq \begin{cases}
- \left(1 - \frac{1}{q} \right) \zeta_n & \text{ if } \zeta_n > C_{K_0} R_0^q, \\
- \left(1 - \frac{1}{q} \right) \frac{\zeta_n^{\frac{q}{q-1}}}{(C_{K_0} R_0^q)^{\frac{1}{q-1}}} & \text{ if } \zeta_n \leq C_{K_0} R_0^q.
\end{cases}
\end{equation}
Combining \cref{Thm:decrease} and~\eqref{Thm2:conv}, we obtain
\begin{equation}
\label{Thm3:conv}
\mathbb{E} [E(u^{(n+1)}) \mid u^{(n)}] - E(u)
\leq \begin{cases}
\left( 1 - \frac{\theta}{J} \left(1 - \frac{1}{q} \right) \right) \zeta_n & \text{ if } \zeta_n > C_{K_0} R_0^q, \\
\zeta_n - \frac{\theta}{J} \left(1 - \frac{1}{q} \right) \frac{\zeta_n^{\frac{q}{q-1}}}{(C_{K_0} R_0^q)^{\frac{1}{q-1}}} & \text{ if } \zeta_n \leq C_{K_0} R_0^q.
\end{cases}
\end{equation}
This proves the desired result for the case $\zeta_0 > C_{K_0} R_0^q$.
On the other hand, by \cref{Cor:decrease}, the condition $\zeta_0 \leq C_{K_0} R_0^q$ ensures $\zeta_n \leq C_{K_0} R_0^q$.
By the law of total expectation and Jensen inequality
\begin{equation*}
\mathbb{E} [\zeta_n]^{\frac{q}{q-1}} \leq \mathbb{E} [\zeta_n^{\frac{q}{q-1}}],
\end{equation*}
we obtain
\begin{equation*}
\mathbb{E} [\zeta_{n+1}]
\leq \mathbb{E}[\zeta_n] - \frac{\theta}{J} \left(1 - \frac{1}{q} \right) \frac{\mathbb{E} [\zeta_n]^{\frac{q}{q-1}}}{(C_{K_0} R_0^q)^{\frac{1}{q-1}}}
\end{equation*}
if $\zeta_0 \leq C_{K_0} R_0^q$.
Finally, invoking~\cite[Lemma~B.2]{LP:2025b} completes the proof for the case $\zeta_0 \leq C_{K_0} R_0^q$.
\end{proof}

The generality of the assumptions in \cref{Thm:conv} enables a broad range of applications, particularly in scenarios where $F$ exhibits a weaker level of smoothness than the standard smoothness condition~\cite{Park:2022b}; see~\cite{LP:2025b,Nesterov:2015} for concrete examples.

\begin{remark}
\label{Rem:J}
Since \cref{Alg:RSC} is expected to visit all subspaces $\{ V_j \}_{j=1}^J$ on average within $J$ iterations, it is natural to examine the convergence behavior of \cref{Alg:RSC} at iteration counts that are integer multiples of $J$, say $nJ$.
We observe that the expected energy error has an upper bound independent of $J$:
\begin{equation*}
\mathbb{E} \bigl[ E(u^{(nJ)}) \bigr] - E(u) \leq \frac{\hat{C}}{ ( n + ( \hat{C} / \zeta_0 )^{1/\beta} )^{\beta}}, \quad n \geq 0,
\end{equation*}
where
\begin{equation*}
\beta = q - 1, \qquad
\hat{C} = \left( \frac{q}{\theta} \right)^{q-1} C_{K_0} R_0^q.
\end{equation*}
\end{remark}

\subsection{Sharp problems}
Meanwhile, in many applications, the energy functional $E$ satisfies the sharpness condition~\cite{RD:2020}, summarized in \cref{Ass:sharp}, which is also known as the H\"{o}lderian error bound of the \L{}ojasiewicz inequality~\cite{BDL:2007,XY:2013}.

\begin{assumption}[sharpness]
\label{Ass:sharp}
For some $p > 1$, the function $E$ satisfies the following: for any bounded convex subset $K$ of $V$ satisfying $u \in K$, we have
\begin{equation}
\label{mu_K}
\mu_K := p \inf_{v \in K} \frac{E(v) - E(u)}{\| v - u \|^p} > 0.
\end{equation}
\end{assumption}

If we additionally assume that \cref{Ass:sharp} holds, then we can derive the following improved convergence theorem for the randomized subspace correction method.

\begin{theorem}
\label{Thm:conv_sharp}
Suppose that \cref{Ass:local,Ass:stable,Ass:sharp} hold.
In the randomized subspace correction method~(\cref{Alg:RSC}), we have the following:
\begin{enumerate}[(a)]
\item In the case $p = q$, we have
\begin{equation*}
\mathbb{E}[ E (u^{(n)}) ] - E (u) 
\leq \left( 1 - \frac{\theta}{J} \left(1 - \frac{1}{q} \right) \min \left\{ 1, \frac{\mu_{K_0}}{q C_{K_0}} \right\}^{\frac{1}{q-1}} \right)^{n} \zeta_0,
\quad n \geq 0,
\end{equation*}
where $\zeta_0 = E (u^{(0)}) - E (u)$, and $\theta$, $K_0$, $C_{K_0}$, and $\mu_{K_0}$ were given in~\eqref{theta}, \eqref{K_0}, \eqref{C_K} and \eqref{mu_K}, respectively.

\item In the case $p > q$, if $\zeta_0 > \left( \frac{p}{\mu_{K_0}} \right)^{\frac{q}{p-q}} C_{K_0}^{\frac{p}{p-q}}$, then we have
\begin{equation*}
    \mathbb{E}[E (u^{(1)})] - E (u) \leq \left(1 - \frac{\theta}{J} \left( 1 - \frac{1}{q} \right) \right) \zeta_0.
\end{equation*}
Otherwise, we have
\begin{equation*}
    \mathbb{E}[E (u^{(n)})] - E (u) \leq \frac{C}{\left( n + (C / \zeta_0)^{1/\beta} \right)^{\beta}},
    \quad n \geq 0,
\end{equation*}
where
\begin{equation*}
\beta = \frac{p(q-1)}{p-q}, \quad
C = \left( \frac{ J p q }{(p-q) \theta} \right)^{\frac{p(q-1)}{p-q}} \left( \frac{p}{\mu_{K_0}} \right)^{\frac{q}{p-q}} C_{K_0}^{\frac{p}{p-q}}.
\end{equation*}
\end{enumerate}
\end{theorem}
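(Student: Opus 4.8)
The plan is to follow the same route used for \cref{Thm:conv}, but now to exploit \cref{Ass:sharp} in order to eliminate the distance factor $\| u - u^{(n)} \|^q$ that appears in the estimate for $\Psi(u^{(n)})$. Writing $\zeta_n := E(u^{(n)}) - E(u) \ge 0$, I would first combine \cref{Lem:Psi} with the convexity bound~\eqref{Psi_tilde_bound} to obtain, exactly as in~\eqref{Thm1:conv},
\[
\Psi(u^{(n)}) \le \min_{t \in [0,1]} \left\{ -t \zeta_n + \frac{C_{K_0}}{q} t^q \| u - u^{(n)} \|^q \right\}.
\]
Since the iterates remain in $K_0$ and $u \in K_0$, \cref{Ass:sharp} then supplies a bound of the form $\| u^{(n)} - u \|^q \le (p/\mu_{K_0})^{q/p} \zeta_n^{q/p}$, which converts the right-hand side into a function of $\zeta_n$ alone.

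Substituting this bound and minimizing the scalar expression $-t \zeta_n + B t^q$ over $t \in [0,1]$, with $B = \frac{C_{K_0}}{q}(p/\mu_{K_0})^{q/p} \zeta_n^{q/p}$, is the computational core. The unconstrained minimizer is $t^\ast = (\zeta_n/(qB))^{1/(q-1)}$, at which the value equals $-(1 - \tfrac1q) t^\ast \zeta_n$; when $t^\ast > 1$ one instead takes $t = 1$. In the case $p = q$ the factor $B$ is proportional to $\zeta_n$, so $t^\ast$ is independent of $n$ and the minimum is a fixed negative multiple of $\zeta_n$, namely $-(1 - \tfrac1q)\min\{1, \mu_{K_0}/(q C_{K_0})\}^{1/(q-1)} \zeta_n$. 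In the case $p > q$ one has $t^\ast \propto \zeta_n^{(p-q)/(p(q-1))}$, so $t^\ast \le 1$ precisely when $\zeta_n$ lies below the threshold $(p/\mu_{K_0})^{q/(p-q)} C_{K_0}^{p/(p-q)}$, which is exactly the dichotomy in part~(b): above the threshold the choice $t = 1$ yields the single geometric step, while below it the minimum equals $-\kappa \zeta_n^{\gamma}$ with $\gamma = \frac{q(p-1)}{p(q-1)} > 1$.

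It then remains to feed these bounds into \cref{Thm:decrease}, which gives $\mathbb{E}[\zeta_{n+1} \mid u^{(n)}] \le \zeta_n + \frac{\theta}{J}\Psi(u^{(n)})$, and to pass to unconditional expectations via the law of total expectation. For $p = q$ this produces a linear recursion $\mathbb{E}[\zeta_{n+1}] \le (1 - \delta)\mathbb{E}[\zeta_n]$ with $\delta = \frac{\theta}{J}(1 - \tfrac1q)\min\{1, \mu_{K_0}/(q C_{K_0})\}^{1/(q-1)}$, giving the geometric rate of part~(a). For $p > q$, Jensen's inequality applied to the convex map $\zeta \mapsto \zeta^{\gamma}$ turns the conditional estimate into $a_{n+1} \le a_n - c\, a_n^{\gamma}$ with $a_n := \mathbb{E}[\zeta_n]$; solving this scalar recursion by the standard comparison argument of~\cite[Appendix~B.1]{LP:2024b} yields the sublinear rate $C/(n + (C/\zeta_0)^{1/\beta})^{\beta}$ with $\beta = 1/(\gamma - 1) = \frac{p(q-1)}{p-q}$, and careful bookkeeping of the constants recovers the stated $C$.

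The step I expect to be the main obstacle is the use of sharpness to replace $\| u^{(n)} - u \|^q$ by a power of $\zeta_n$. \cref{Ass:sharp} is phrased in terms of $F$, so it directly controls $\| u^{(n)} - u \|^p$ by $F(u^{(n)}) - F(u)$, whereas the recursion is driven by the composite gap $\zeta_n = E(u^{(n)}) - E(u)$; reconciling the two requires care in the nonsmooth case, since $G(u^{(n)}) \ge G(u)$ need not hold in general. Establishing the correct inequality $\| u^{(n)} - u \|^q \lesssim \zeta_n^{q/p}$ in the composite setting, and then tracking the multiplicative constants cleanly enough to reproduce the precise $C$ and the threshold in part~(b), is where the real work lies; the recursion-solving itself is routine once the nonlinear scalar estimate is in hand.
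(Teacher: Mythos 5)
Your proposal is correct and follows essentially the same route as the paper, which simply invokes the bound \eqref{Thm1:conv} together with the scalar optimization-and-recursion argument of \cite[Appendix~B.2]{LP:2024b}; your computation of $t^\ast$, the threshold $\zeta_n \le (p/\mu_{K_0})^{q/(p-q)} C_{K_0}^{p/(p-q)}$, the exponent $\gamma = \tfrac{q(p-1)}{p(q-1)}$, and the constants $\beta$ and $C$ all match. The one obstacle you flag---that \cref{Ass:sharp} is stated for $F$ while the recursion is driven by $\zeta_n = E(u^{(n)}) - E(u)$---is resolved by reading the sharpness condition as a bound on the composite gap (i.e.\ $\|v-u\|^p \le \tfrac{p}{\mu_{K_0}}\,(E(v)-E(u))$ on $K_0$), which is the reading forced by the paper's own remark that \cref{Ass:strong} (strong convexity of $E$, with $u$ the minimizer of $E$) implies \cref{Ass:sharp} with $p=2$; with that interpretation the inequality $\|u^{(n)}-u\|^q \le (p\zeta_n/\mu_{K_0})^{q/p}$ is immediate and the rest of your argument goes through verbatim.
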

\begin{proof}
We again set $\zeta_n = E(u^{(n)}) - E(u)$.
By \cref{Ass:sharp} and~\eqref{Thm1:conv}, we obtain
\begin{equation}
\label{Thm1:conv_sharp}
\Psi (u^{(n)})
\leq \min_{t \in [0,1]} \left\{ - t \zeta_n + \frac{t^q C_{K_0}}{q} \| u - u^{(n)} \|^q \right\}
\leq \min_{t \in [0,1]} \left\{ - t \zeta_n + \frac{t^q p^{\frac{q}{p}} C_{K_0}}{q \mu_{K_0}^{\frac{q}{p}}} \zeta_n^{\frac{q}{p}} \right\}.
\end{equation}

We first consider the case $p = q$.
It follows from~\eqref{Thm1:conv_sharp} that
\begin{equation}
\label{Thm2:conv_sharp}
\Psi (u^{(n)})
\leq \min_{t \in [0,1]} \left\{ - t \zeta_n + \frac{t^q q C_{K_0}}{q \mu_{K_0}} \zeta_n \right\}
\leq - \zeta_n \left(1 - \frac{1}{q} \right)
\min \left\{ 1 , \frac{\mu_{K_0}}{q C_{K_0}} \right\}^{\frac{1}{q-1}}.
\end{equation}
Combining \cref{Thm:decrease} and~\eqref{Thm2:conv_sharp}, we obtain
\begin{equation*}
\mathbb{E} [E(u^{(n+1)}) \mid u^{(n)} ] - E(u)
\leq \left( 1 - \frac{\theta}{J} \left(1 - \frac{1}{q} \right)
\min \left\{ 1, \frac{\mu_{K_0}}{q C_{K_0}} \right\}^{\frac{1}{q-1}} \right) \zeta_n.
\end{equation*}
Invoking the law of total expectation yields the desired result.

Next, we consider the case $p > q$.
Applying the argument in~\cite[Equation~(B.5)]{LP:2025b} to~\eqref{Thm1:conv_sharp}, we obtain
\begin{equation}
\label{Thm3:conv_sharp}
\Psi (u^{(n)})
\leq \begin{cases}
- \left(1 - \frac{1}{q} \right) \zeta_n
& \text{ if } \zeta_n > \left( \frac{p}{\mu_{K_0}} \right)^{\frac{q}{p-q}} C_{K_0}^{\frac{p}{p-q}}, \\[0.5ex]
- \left(1 - \frac{1}{q} \right)
\left( \frac{\mu_{K_0}}{p} \right)^{\frac{q}{p(q-1)}}
\frac{\zeta_n^{\frac{q(p-1)}{p(q-1)}}}{C_{K_0}^{\frac{1}{q-1}}}
& \text{ if } \zeta_n \leq \left( \frac{p}{\mu_{K_0}} \right)^{\frac{q}{p-q}} C_{K_0}^{\frac{p}{p-q}}.
\end{cases}
\end{equation}
Combining \cref{Thm:decrease} and~\eqref{Thm3:conv_sharp}, we derive
\begin{equation}
\label{Thm4:conv_sharp}
\resizebox{\textwidth}{!}{$\displaystyle
\mathbb{E} [ E (u^{(n+1)}) \mid u^{(n)} ]
\leq \begin{cases}
\left( 1 - \frac{\theta}{J} \left(1 - \frac{1}{q} \right) \right) \zeta_n
& \text{ if } \zeta_n > \left( \frac{p}{\mu_{K_0}} \right)^{\frac{q}{p-q}} C_{K_0}^{\frac{p}{p-q}}, \\[0.5ex]
\zeta_n - \frac{\theta}{J} \left(1 - \frac{1}{q} \right)
\left( \frac{\mu_{K_0}}{p} \right)^{\frac{q}{p(q-1)}}
\frac{\zeta_n^{\frac{q(p-1)}{p(q-1)}}}{C_{K_0}^{\frac{1}{q-1}}}
& \text{ if } \zeta_n \leq \left( \frac{p}{\mu_{K_0}} \right)^{\frac{q}{p-q}} C_{K_0}^{\frac{p}{p-q}}.
\end{cases}
$}
\end{equation}
Observing that~\eqref{Thm3:conv} and~\eqref{Thm4:conv_sharp} share the same structure, proceeding as in the proof of \cref{Thm:conv} completes the proof.
\end{proof}

\begin{remark}
\label{Rem:J_sharp}
Similar to \cref{Rem:J}, the expected energy error at the $nJ$th iteration of \cref{Alg:RSC} admits an upper bound that is independent of $J$.
In the case $p = q$, we have
\begin{equation*}
\begin{split}
\mathbb{E}[E(u^{(n J)})] - E(u)
&\leq \left( 1 - \frac{\theta}{J} \left( 1 - \frac{1}{q} \right) \min \left\{ 1, \frac{\mu_{K_0}}{q C_{K_0}}  \right\}^{\frac{1}{q-1}} \right)^{n J} \zeta_0 \\
&\leq \exp \left( - n \theta \left( 1 - \frac{1}{q} \right) \min \left\{ 1, \frac{\mu_{K_0}}{q C_{K_0}} \right\}^{\frac{1}{q-1}} \right) \zeta_0.
\end{split}
\end{equation*}
The case $p > q$ can be analyzed analogously to \cref{Rem:J}. 
Namely, we have
\begin{equation*}
\mathbb{E} \bigl[ E(u^{(nJ)}) \bigr] - E(u) \leq \frac{\hat{C}}{ ( n + ( \hat{C} / \zeta_0 )^{1/\beta} )^{\beta}}, \quad n \geq 0,
\end{equation*}
where
\begin{equation*}
\beta = \frac{p(q-1)}{p-q}, \quad
\hat{C} = \left( \frac{ p q }{(p-q) \theta} \right)^{\frac{p(q-1)}{p-q}} \left( \frac{p}{\mu_{K_0}} \right)^{\frac{q}{p-q}} C_{K_0}^{\frac{p}{p-q}}.
\end{equation*}
\end{remark}

\begin{remark}
\label{Rem:conv_sharp}
In the case of smooth problems, i.e., when $G = 0$ in~\eqref{model} and $G_j = 0$, $j \in [J]$ in~\eqref{local_inexact}, the global stable decomposition condition~\eqref{C_V}, together with the argument in \cref{Rem:Psi}, yields the following simplified estimate corresponding to \cref{Thm:conv_sharp}(a):
\begin{equation*}
\mathbb{E} \big[F(u^{(n)})\big] - F(u) \leq \left( 1 - \frac{\theta}{J} \left(1 - \frac{1}{q} \right) \left( \frac{\mu_{K_0}}{q C_V} \right)^{\frac{1}{q-1}} \right)^{n} \zeta_0, \quad n \geq 0.
\end{equation*}
\end{remark}

\subsection{Strongly convex problems}
Next, we consider a stronger condition than \cref{Ass:sharp}, stated in \cref{Ass:strong}.
Recall that a functional $H \colon V \to \overline{\mathbb{R}}$ is said to be $\mu$-strongly convex on a convex set $K \subset V$ if
\begin{equation*}
H ((1-t)v + tw) \leq (1-t) H(v) + t H(w) - t(1-t) \frac{\mu}{2} \| v - w \|^2,
\quad t \in [0,1],\ v,w \in K.
\end{equation*}
Note that \cref{Ass:strong} implies \cref{Ass:sharp} with $p = 2$.

\begin{assumption}[strong convexity]
\label{Ass:strong}
For any bounded convex subset $K$ of $V$ satisfying $u \in K$, $E$ and $F$ are $\mu_K$- and $\mu_{F,K}$-strongly convex on $K$, respectively, for some $\mu_K > 0$ and $\mu_{F,K} \geq 0$.
\end{assumption}

Note that $\mu_K\geq \mu_{F,K}$. Under \cref{Ass:strong}, we have the following upper bound for~\eqref{Psi_tilde}:
\begin{equation}
\label{Psi_tilde_bound_strong}
\begin{split}
\tilde{\Psi} (t; u^{(n)})
&\leq E( (1-t) u^{(n)} + t u) - E(u^{(n)}) - \frac{\mu_{F, K_0}}{2} t^2 \| u - u^{(n)} \|^2 \\
&\leq -t ( E(u^{(n)}) - E(u)) + \left( \frac{\mu_{K_0} - \mu_{F, K_0}}{2} t^2 - \frac{\mu_{K_0}}{2} t \right) \| u - u^{(n)} \|^2,
\end{split}
\end{equation}
where the first and second inequalities follow from the strong convexity of $F$ and $E$, respectively.

As \cref{Ass:strong} imposes a stronger condition than \cref{Ass:sharp}, the convergence rate established in \cref{Thm:conv_sharp}(b) is guaranteed under \cref{Ass:strong}.
However, by using~\eqref{Psi_tilde_bound_strong}, we can derive an even sharper estimate for the convergence rate, as presented in \cref{Thm:conv_strong}, when $q = 2$ in \cref{Ass:stable}.
A similar result appears in~\cite[Theorem~3.3]{Park:2024a}.

\begin{theorem}
\label{Thm:conv_strong}
Suppose that \cref{Ass:local,Ass:stable,Ass:strong} hold with $q = 2$.
In the randomized subspace correction method~(\cref{Alg:RSC}), we have
\begin{equation*}
\mathbb{E}[E (u^{(n)})] - E (u)
\leq \left( 1 - \frac{\theta}{J} \min \left\{ 1,  \frac{\mu_{K_0}}{C_{K_{0}} + \mu_{K_0} - \mu_{F, K_0}} \right\} \right)^{n} \zeta_0, \quad n \geq 0,
\end{equation*}
where $\zeta_0 = E (u^{(0)}) - E (u)$, and $\theta$, $K_0$, $C_{K_0}$, $\mu_{K_0}$, and $\mu_{F, K_0}$ were given in~\eqref{theta}, \eqref{K_0}, \eqref{C_K} and \cref{Ass:strong}.
\end{theorem}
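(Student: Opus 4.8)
The plan is to assemble the one-step descent bound of \cref{Thm:decrease} with the strong-convexity estimate already recorded in \eqref{Psi_tilde_bound_strong}, and then iterate a linear contraction. Throughout I write $\Delta_n := E(u^{(n)}) - E(u) \ge 0$ and $r_n := \|u - u^{(n)}\|$, and set $a := C_{K_0} + \mu_{K_0} - \mu_{F,K_0}$. Note that $u^{(n)} \in K_0$ for every $n$ by \cref{Cor:decrease}, and $u \in K_0$, so every segment point $(1-t)u^{(n)} + tu$ with $t \in [0,1]$ lies in the convex set $K_0$; hence \cref{Ass:strong} is applicable along these segments, which is what \eqref{Psi_tilde_bound_strong} uses.

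First I would specialize \cref{Lem:Psi} to $q = 2$ and, recalling the definition \eqref{Psi_tilde} of $\tilde{\Psi}$, read it as
\[
\Psi(u^{(n)}) \le \min_{t \in [0,1]} \Big\{ \tilde{\Psi}(u^{(n)}) + \tfrac{C_{K_0}}{2} t^2 r_n^2 \Big\}.
\]
Substituting the bound \eqref{Psi_tilde_bound_strong} for $\tilde{\Psi}(u^{(n)})$ and collecting the coefficients of $r_n^2$ gives
\[
\Psi(u^{(n)}) \le \min_{t \in [0,1]} \Big\{ -t\,\Delta_n + \big( \tfrac{a}{2} t^2 - \tfrac{\mu_{K_0}}{2} t \big) r_n^2 \Big\}.
\]

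The crucial observation is that the negative term $-\tfrac{\mu_{K_0}}{2} t\, r_n^2$, which originates from the strong convexity of $E$, already controls the $r_n^2$ contribution, so no separate quadratic-growth bound on $r_n$ is needed. I would evaluate the bracket at $t = s := \min\{1, \mu_{K_0}/a\} \in (0,1]$ and split into two cases. If $\mu_{K_0}/a \le 1$, then $s = \mu_{K_0}/a$ makes the coefficient $\tfrac{s}{2}(a s - \mu_{K_0}) = 0$ vanish, leaving $\Psi(u^{(n)}) \le -s\,\Delta_n$. If $\mu_{K_0}/a > 1$, then $s = 1$ and the coefficient $\tfrac12(a - \mu_{K_0}) < 0$ renders the $r_n^2$ term nonpositive, so again $\Psi(u^{(n)}) \le -\Delta_n = -s\,\Delta_n$. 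In both cases $\Psi(u^{(n)}) \le -s\,\Delta_n$. Here I use $a \ge C_{K_0} > 0$, which holds because $E = F + G$ with $G$ convex implies that the strong-convexity modulus of $E$ is at least that of $F$, i.e.\ $\mu_{K_0} \ge \mu_{F,K_0}$; verifying this sign condition (so that $\mu_{K_0}/a$ is well defined and the contraction factor is meaningful) is the one genuinely delicate bookkeeping point, and everything else is a direct assembly.

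Finally I would feed this into \cref{Thm:decrease} to obtain the conditional contraction
\[
\mathbb{E}[\Delta_{n+1} \mid u^{(n)}] \le \Delta_n + \tfrac{\theta}{J} \Psi(u^{(n)}) \le \Big( 1 - \tfrac{\theta}{J} s \Big) \Delta_n .
\]
Taking total expectation via the tower property and iterating from $\Delta_0 = \zeta_0$ yields $\mathbb{E}[\Delta_n] \le \big( 1 - \tfrac{\theta}{J} s \big)^n \zeta_0$, which is precisely the asserted rate with $s = \min\{1, \mu_{K_0}/(C_{K_0} + \mu_{K_0} - \mu_{F,K_0})\}$. The main obstacle is thus not any hard estimate but the correct sign analysis at $t = s$ together with the positivity of $a$; the rest follows mechanically from \cref{Lem:Psi}, \eqref{Psi_tilde_bound_strong}, and \cref{Thm:decrease}.
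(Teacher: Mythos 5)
Your proposal is correct and follows essentially the same route as the paper's proof: combine \cref{Lem:Psi} with \eqref{Psi_tilde_bound_strong} to obtain \eqref{Thm1:conv_strong}, evaluate at $t=\min\{1,\mu_{K_0}/(C_{K_0}+\mu_{K_0}-\mu_{F,K_0})\}$ in the same two cases, and feed the resulting bound $\Psi(u^{(n)})\le -s\,(E(u^{(n)})-E(u))$ into \cref{Thm:decrease} before iterating via the tower property. The only (cosmetic) difference is in the case $t=1$, where you simply drop the nonpositive $\|u-u^{(n)}\|^2$ term directly, whereas the paper reaches the same conclusion through an extra application of strong convexity.
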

\begin{proof}
Note that $C_{K_0} + \mu_{K_0} - \mu_{F, K_0}$ is positive under \cref{Ass:strong}.
Combining \cref{Lem:Psi} and~\eqref{Psi_tilde_bound_strong} yields
\begin{equation}
\label{Thm1:conv_strong}
\resizebox{\textwidth}{!}{$ \displaystyle
\Psi(u^{(n)}) \leq \min_{t \in [0, 1]} \left\{ -t (E (u^{(n)}) - E(u) ) + \left( \frac{C_{K_0} + \mu_{K_0} - \mu_{F, K_0}}{2} t^2 - \frac{\mu_{K_0}}{2} t \right) \| u - u^{(n)} \|^2 \right\}.
$}
\end{equation}
If $\frac{\mu_{K_0}}{C_{K_0} + \mu_{K_0} - \mu_{F, K_0}} \leq 1$, then setting $t = \frac{\mu_{K_0}}{C_{K_0} + \mu_{K_0} - \mu_{F, K_0}}$ in~\eqref{Thm1:conv_strong} gives
\begin{equation}
\label{Thm2:conv_strong}
\Psi(u^{(n)}) \leq - \frac{\mu_{K_0}}{C_{K_0} + \mu_{K_0} - \mu_{F, K_0}} (E(u^{(n)}) - E(u)).
\end{equation}
Otherwise, if $\frac{\mu_{K_0}}{C_{K_0} + \mu_{K_0} - \mu_{F, K_0}} > 1$, i.e., if $C_{K_0} - \mu_{F, K_0} < 0$, then setting $t = 1$ in~\eqref{Thm1:conv_strong} yields
\begin{equation}
\label{Thm3:conv_strong}
\Psi(u^{(n)})
\leq - (E (u^{(n)}) - E (u)) + \frac{C_{K_0} - \mu_{F, K_0}}{2} \| u - u^{(n)} \|^2
\leq - (E (u^{(n)}) - E (u)).
\end{equation}
Combining \cref{Thm:decrease} with~\eqref{Thm2:conv_strong} and~\eqref{Thm3:conv_strong} completes the proof.
\end{proof}

\begin{remark}
\label{Rem:conv_strong}
Similar to \cref{Rem:conv_sharp}, in the case of smooth problems, i.e., when $G = 0$ in~\eqref{model} and $G_j = 0$, $j \in [J]$ in~\eqref{local_inexact}, the linear convergence rate in~\cref{Thm:conv_strong} simplifies under the global stable decomposition condition~\eqref{C_V} as follows:
\begin{equation*}
\mathbb{E}[ F (u^{(n)}) ] - F (u)
\leq \left( 1 - \frac{\theta\mu_{K_0}}{JC_V} \right)^{n} \zeta_0,
\quad n \geq 0.
\end{equation*}
\end{remark}

\begin{remark}
\label{Rem:overlap}
In all convergence results presented in this section, the convergence rate of \cref{Alg:RSC} improves when $C_{K_0}$ is small, where $C_{K_0}$ was defined in~\eqref{K_0} and~\eqref{C_K}.
Since $C_{K_0}$ involves an infimum over all $w_j \in V_j$, $j \in [J]$, satisfying $w = \sum_{j=1}^J w_j$ for a given $w \in V$, this explains why overlapping subspaces are beneficial: overlap allows more flexibility in the decomposition, leading to a smaller value of the infimum.
In contrast, for nonoverlapping decompositions, the representation $w = \sum_{j=1}^J w_j$ is unique, so the infimum offers no advantage.
\end{remark}

\section{Derivation of related methods}
\label{Sec:Related}
To highlight the versatility of the randomized subspace correction framework presented in this paper, we demonstrate in this section how several related results can be derived from it, including the randomized subspace correction method for linear problems~\cite{GO:2012,HXZ:2019}, block coordinate descent methods~\cite{LX:2015,Nesterov:2012,RT:2014}, and operator splitting methods~\cite{JPX:2025}.

Throughout this section, we assume that $V$ is a Hilbert space equipped with an inner product $(\cdot, \cdot)$ and the induced norm $\| \cdot \|$.
Moreover, we identify $V$ with its topological dual space $V^*$ (cf.~\cite{XZ:2017}).

\subsection{Linear problems}
\label{Subsec:linear}
The randomized subspace correction method for linear problems has been previously studied in~\cite{GO:2012,HXZ:2019}.
Here, we demonstrate how the general framework introduced in this paper recovers these existing results.

In~\eqref{model}, we set
\begin{equation*}
    F(v) = \frac{1}{2} (Av, v) - (f, v), 
    \quad G(v) = 0, 
    \quad v \in V,
\end{equation*}
where \( A \colon V \to V \) is a linear operator induced by a continuous, symmetric, and coercive bilinear form on \( V \), and \( f \in V \).
Then, it is readily observed that~\eqref{model} reduces to the linear problem
\begin{equation}
\label{linear}
    Au = f.
\end{equation}

In the local problem~\eqref{local_inexact}, for each \( j \in [J] \), we set
\begin{equation}
\label{local_linear}
    F_j (w_j; v) = F(v) + (F'(v), w_j) + \frac{1}{2} (R_j^{-1} w_j, w_j), \quad
    G_j (w_j; v) = 0, \quad
    w_j \in V_j,\ v \in V,
\end{equation}
where \( R_j \colon V_j \to V_j \) is a linear operator induced by a continuous, symmetric, and coercive bilinear form on \( V_j \).
Then, the solution \( \hat{w}_j \) of~\eqref{local_inexact} is given by
\begin{equation*}
    \hat{w}_j = - R_j Q_j (Av - f),
\end{equation*}
where \( Q_j \colon V \to V_j \) denotes the orthogonal projection onto \( V_j \).
Hence, we observe that \cref{Alg:RSC} reduces to the randomized subspace correction method for linear problems introduced in~\cite{GO:2012,HXZ:2019}.

To analyze the algorithm, it suffices to verify \cref{Ass:local,Ass:stable,Ass:strong}.
We adopt the \( \| \cdot \|_A \)-norm defined by \( \| \cdot \|_A := (A \cdot, \cdot)^{\frac{1}{2}} \).
Note that \cref{Ass:local}(a, b) are trivially satisfied due to~\eqref{local_linear}.
In \cref{Ass:local}(c), we have \( \rho = 2 \)~\cite[Example~1]{PX:2024}, and the assumption reduces to the following condition: for some \( \omega \in (0, 2) \), we require
\begin{equation}
\label{local_linear_stable}
    (A w_j, w_j) \leq \omega (R_j^{-1} w_j, w_j),
    \quad w_j \in V_j,
\end{equation}
which corresponds to the standard assumption on local solvers, as found in, e.g.,~\cite[Assumption~2.4]{TW:2005} and~\cite[Equation~(4.6)]{XZ:2002}.
Moreover, \cref{Prop:stable} ensures that \cref{Ass:stable} holds.
Specifically, we have \( q = 2 \), and for any bounded convex subset $K$ of $V$, it follows that
\begin{equation*}
    C_K \leq \sup_{w \in V} \inf_{w = \sum_{j=1}^J w_j} 
    \frac{\sum_{j=1}^J (R_j^{-1} w_j, w_j)}{(A w, w)} 
    = \lambda_{\min}(T)^{-1},
\end{equation*}
where the operator \( T \colon V \to V \) is given by
\begin{equation}
\label{T}
    T = \sum_{j=1}^J R_j Q_j A,
\end{equation}
and the last equality follows from the well-known estimate in~\cite[Equation~(2.17)]{XZ:2002}; see also~\cite[Section~4.1]{Park:2020}.
Finally, we observe that \cref{Ass:strong} holds with
\begin{equation*}
\mu_K = \mu_{F,K} \geq 1
\end{equation*}
for any bounded convex subset $K$ of $V$.
Therefore, under the condition~\eqref{local_linear_stable}, the convergence estimate in \cref{Rem:conv_strong} yields
\begin{equation*}
    \mathbb{E}[\| u^{(n)} - u \|_A^2] 
    \leq \left( 1 - \frac{\theta \lambda_{\min}(T)}{J} \right)^{n} 
    \| u^{(0)} - u \|_A^2,
    \quad n \geq 0,
\end{equation*}
where \( \theta \) and $T$ were given in~\eqref{theta} and~\eqref{T}, respectively~(cf.~\cite[Theorem~1(b)]{GO:2012}).


\subsection{Block coordinate descent methods}
\label{Subsec:BCD}
Block coordinate descent methods are important instances of subspace correction methods for convex optimization.
We discuss how randomized block coordinate descent methods~\cite{LX:2015,Nesterov:2012,RT:2014} can be interpreted within the framework of the randomized subspace correction method; see also~\cite[Section~3.3]{JPX:2025} and~\cite[Section~6.4]{Park:2020} for related discussions.

We begin by presenting the standard setting for block coordinate descent methods~\cite{LX:2015,RT:2014}.  
Assume that the space \( V \) is given by the direct sum of subspaces \( V_j \), \( j \in [J] \):
\begin{equation*}
    V = \bigoplus_{j=1}^J V_j,
\end{equation*}
so that each \( v \in V \) can be represented in block form as~\cite[Proposition~1]{RT:2016}
\begin{equation}
\label{block}
    v = (v_1, v_2, \dots, v_J),
    \quad v_j \in V_j.
\end{equation}

In the composite optimization problem~\eqref{model}, we assume that \( F \) is block smooth.
That is, there exist positive constants \( L_j \), \( j \in [J] \), such that
\begin{equation}
\label{F_RBCD}
    F(v + w_j) \leq F(v) + ( F'(v), w_j ) + \frac{L_j}{2} \| w_j \|^2,
    \quad v \in V,\ w_j \in V_j.
\end{equation}
In addition, we assume that the functional \( G \) is block separable, meaning it admits the decomposition
\begin{equation}
\label{G_RBCD}
    G(v) = \sum_{j=1}^J G^j(v_j),
    \quad v \in V,
\end{equation}
for some functionals \( G^j \colon V_j \to \overline{\mathbb{R}} \).

In the local problem~\eqref{local_inexact}, for each \( j \in [J] \), we set
\begin{equation}
\label{local_RBCD}
\begin{aligned}
F_j(w_j; v) &= F(v) + ( F'(v), w_j ) + \frac{L_j}{2} \| w_j \|^2, \\
G_j(w_j; v) &= G ( v + w_j),
\end{aligned}
\quad w_j \in V_j,\ v \in V.
\end{equation}
Then, it is readily seen that the local problem~\eqref{local_inexact} computes the increment of a single proximal descent step with step size \( 1 / L_j \) applied to the \( j \)th block coordinate; see \cref{Ex:local_inexact}.
Consequently, \cref{Alg:RSC} reduces to the randomized block coordinate descent method considered in~\cite{LX:2015,RT:2014}.

Next, we demonstrate that our convergence theory recovers the results established in~\cite{RT:2014} for the randomized block coordinate descent method.
It is straightforward to verify that \cref{Ass:local} holds with $\omega = 1$, based on~\eqref{F_RBCD},~\eqref{G_RBCD}, and~\eqref{local_RBCD}.
Moreover, using~\eqref{G_RBCD}, we obtain (cf.~\cite[Section~6.4]{Park:2020})
\begin{equation*}
    \sum_{j=1}^J G_j(w_j; v) = G(v + w) + (J - 1) G(v),
    \quad v,w \in V.
\end{equation*}
From~\eqref{F_RBCD} and~\eqref{local_RBCD}, we have
\begin{equation*}
    \sum_{j=1}^J d_j(w_j; v) \leq \frac{1}{2} \| w \|_L^2,
    \quad v, w \in V,
\end{equation*}
where the norm \( \| \cdot \|_L \) is defined by
\begin{equation*}
    \| v \|_L = \left( \sum_{j=1}^J L_j \| v_j \|^2 \right)^{\frac{1}{2}},
    \quad v \in V.
\end{equation*}
That is, \cref{Ass:stable} holds with the \( \| \cdot \|_L \)-norm, \( q = 2 \), and \( C_K = 1 \) for any bounded convex subset \( K \subset V \).
Consequently, by \cref{Thm:conv}, the randomized block coordinate descent method satisfies
\begin{equation*}
    \mathbb{E}[E(u^{(n)})] - E(u) \leq \frac{2 J R_0^2}{n + 2 J R_0^2 / ( E(u^{(0)}) - E(u) )},
    \quad n \geq 0,
\end{equation*}
provided that \( E(u^{(0)}) - E(u) \) is sufficiently small.
Here, \( R_0 \) is defined by~\eqref{K_0} with respect to the \( \| \cdot \|_L \)-norm.
This result is consistent with~\cite[Theorem~5]{RT:2014} and~\cite[Equation~(16)]{LX:2015}.

Now suppose further that \( F \) and \( G \) are \( \mu_F \)- and \( \mu_G \)-strongly convex with respect to the \( \| \cdot \|_L \)-norm, respectively, for some \( \mu_F, \mu_G \geq 0 \) with \( \mu_F + \mu_G > 0 \).
In this case, \cref{Thm:conv_strong} implies the following linear convergence rate:
\begin{equation*}
    \mathbb{E}[E(u^{(n)})] - E(u) \leq \left(1 - \frac{1}{J} \frac{\mu_F + \mu_G}{1 + \mu_G}\right)^{n} (E(u^{(0)}) - E(u)),
    \quad n \geq 0,
\end{equation*}
which agrees with~\cite[Theorem~7]{RT:2014}.

\subsection{Operator splitting methods}
\label{Subsec:OS}
By utilizing the duality between subspace correction and operator splitting methods developed in~\cite{JPX:2025}, we can derive a randomized operator splitting method from the randomized subspace correction method.

As a model problem for operator splitting methods, we consider the following optimization problem involving the sum of multiple convex functionals:
\begin{equation}
\label{model_OS}
    \min_{v \in V} \left\{ F(v) + \sum_{j=1}^J G_j (B_j v) \right\},
\end{equation}
where \( V \) and each \( W_j \), \( j \in [J] \), are Hilbert spaces, \( B_j \colon V \to W_j \) are continuous linear operators, and \( F \colon V \to \mathbb{R} \) and \( G_j \colon W_j \to \overline{\mathbb{R}} \) are proper, convex, and lower semicontinuous functionals.
For simplicity, we assume that \( F \) is strongly convex and smooth so that \( F' \) is invertible~\cite{BC:2011}.

A randomized Peaceman--Rachford-type splitting method for solving~\eqref{model_OS} is presented in \cref{Alg:OS}.
At each iteration of \cref{Alg:OS}, an index \( j \in [J] \) is selected at random, and an optimization problem involving only the functional \( G_j \) is solved.
Recall that $d$ denotes the Bregman divergence associated with $F$.

\begin{algorithm}
\caption{Randomized Peaceman--Rachford splitting algorithm for~\eqref{model_OS}}
\begin{algorithmic}[]
\label{Alg:OS}
\STATE Choose $u^{(0)} \in V$ and $v_1^{(0)},\dots,v_J^{(0)} \in V$.
\FOR{$n=0,1,2,\dots$}
    \STATE Sample $j \in [J]$ from the uniform distribution on $[J]$.
    \STATE $\displaystyle
        u^{(n+1)} = \operatornamewithlimits{\arg\min}_{v \in V} \left\{ d (v - u^{(n)}; u^{(n)}) + (v_j^{(n)}, v) + G_j (B_j v) \right\}
        $
    \STATE $\displaystyle
        v_i^{(n+1)} = \begin{cases}
            v_i^{(n)} + F' (u^{(n+1)}) - F'(u^{(n)}), & \quad \text{ if } i = j, \\
            v_i^{(n)}, & \quad \text{ if } i \neq j,
        \end{cases}
        \quad i \in [J]
        $
\ENDFOR
\end{algorithmic}
\end{algorithm}

Similar to~\cite[Theorem~5.4]{JPX:2025}, \cref{Thm:OS} shows that \cref{Alg:OS} is a \textit{dualization} of the randomized subspace correction method applied to the following dual problem:
\begin{equation}
\label{model_OS_dual}
\min_{(p_j)_{j=1}^J \in \bigoplus_{j=1}^J W_j} \left\{ F^* \left( - \sum_{j=1}^J B_j^* p_j \right) + \sum_{j=1}^J G_j^* (p_j) \right\},
\end{equation}
where \( F^* \colon V \to \mathbb{R} \) and \( G_j^* \colon W_j \to \overline{\mathbb{R}} \) denote the Legendre--Fenchel conjugates of \( F \) and \( G_j \), respectively, and \( B_j^* \) denotes the adjoint of \( B_j \).
Note that~\eqref{model_OS_dual} is an instance of~\eqref{model} in which the nonsmooth part is block separable~(cf.~\eqref{G_RBCD}).

\begin{theorem}
\label{Thm:OS}
Let $\{ u^{(n)} \}$, $\{ (v_j^{(n)})_{j=1}^J \}$, and $\{ (p_j^{(n)} )_{j=1}^J \}$ be the sequences generated by the randomized Peaceman--Rachford splitting algorithm for solving~\eqref{model_OS}~(\cref{Alg:OS}) and the randomized subspace correction method with exact local problems for solving~\eqref{model_OS_dual}.
If
\begin{equation*}
u^{(0)} = (F^*)' \left( - \sum_{j=1}^J B_j^* p_j^{(0)} \right), \quad
v_j^{(0)} = - B_j^* p_j^{(0)},
\quad j \in [J],
\end{equation*}
then we have
\begin{equation*}
u^{(n)} \stackrel{\text{a.s.}}{=} (F^*)' \left( - \sum_{j=1}^J B_j^* p_j^{(n)} \right), \quad
v_j^{(n)} \stackrel{\text{a.s.}}{=} - B_j^* p_j^{(n)},
\quad j \in [J],\ n \geq 1.
\end{equation*}
\end{theorem}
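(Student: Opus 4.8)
The plan is to prove the two identities simultaneously by induction on $n$, coupling the two algorithms so that they draw the same index $j \in [J]$ at every iteration. The base case $n=0$ is exactly the stated initialization hypothesis, so the whole content lies in the inductive step. The organizing device is the change of variables $s^{(n)} := -\sum_{i=1}^J B_i^* p_i^{(n)}$, under which the claimed correspondence reads $F'(u^{(n)}) = s^{(n)}$ (equivalently $u^{(n)} = (F^*)'(s^{(n)})$, using that the strong convexity and smoothness of $F$ make $F'$ a bijection with $(F')^{-1} = (F^*)'$), together with $v_i^{(n)} = -B_i^* p_i^{(n)}$ and hence $\sum_i v_i^{(n)} = s^{(n)}$.

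First I would record the local problem that the randomized subspace correction method (\cref{Alg:RSC}) solves when applied to the dual~\eqref{model_OS_dual}. Since the nonsmooth part $\sum_i G_i^*(p_i)$ is block separable, updating the $j$th block amounts to
\[
p_j^{(n+1)} \in \operatornamewithlimits{\arg\min}_{p_j \in W_j}\left\{ F^*\Big(-\textstyle\sum_{i\neq j} B_i^* p_i^{(n)} - B_j^* p_j\Big) + G_j^*(p_j) \right\},
\]
with $p_i^{(n+1)} = p_i^{(n)}$ for $i \neq j$. By the chain rule for $p_j \mapsto F^*(-\sum_i B_i^* p_i)$ and the conjugate subgradient identity, its optimality condition is $p_j^{(n+1)} \in \partial G_j\big(B_j (F^*)'(s^{(n+1)})\big)$, where $s^{(n+1)} = -\sum_i B_i^* p_i^{(n+1)}$; note the telescoping identity $s^{(n+1)} - s^{(n)} = B_j^*(p_j^{(n)} - p_j^{(n+1)})$.

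Next I would write the optimality condition for the Peaceman--Rachford step. Expanding $d(v - u^{(n)}; u^{(n)})$ and differentiating, $u^{(n+1)}$ is characterized by the existence of $\xi \in \partial G_j(B_j u^{(n+1)})$ with $F'(u^{(n+1)}) - F'(u^{(n)}) + v_j^{(n)} + B_j^*\xi = 0$. The decisive move is then to \emph{verify}, rather than solve, the correspondence: I set $\hat u := (F^*)'(s^{(n+1)})$ and take $\xi := p_j^{(n+1)}$, and check that this pair satisfies the Peaceman--Rachford system. The inclusion $\xi \in \partial G_j(B_j\hat u)$ is exactly the dual optimality condition, and the linear identity $F'(\hat u) - F'(u^{(n)}) + v_j^{(n)} + B_j^*\xi = 0$ collapses to $0$ after substituting $F'(\hat u) = s^{(n+1)}$, the inductive hypotheses $F'(u^{(n)}) = s^{(n)}$ and $v_j^{(n)} = -B_j^* p_j^{(n)}$, and the telescoping above. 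Because $F$ is strongly convex, the Peaceman--Rachford subproblem has a unique minimizer, so $u^{(n+1)} = \hat u$, which yields $F'(u^{(n+1)}) = s^{(n+1)}$; the $v$-update then gives $v_j^{(n+1)} = v_j^{(n)} + F'(u^{(n+1)}) - F'(u^{(n)}) = -B_j^* p_j^{(n+1)}$, while $v_i^{(n+1)} = v_i^{(n)} = -B_i^* p_i^{(n+1)}$ for $i \neq j$, closing the induction.

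The main obstacle I anticipate is the potential non-uniqueness of the subgradient $\xi$ and of the dual minimizer $p_j^{(n+1)}$ when $B_j^*$ has nontrivial kernel, which is precisely why the conclusion is stated almost surely. I would handle this by never asserting uniqueness of $\xi$ or $p_j^{(n+1)}$ individually: the verification argument shows that for \emph{any} valid dual minimizer the constructed $\hat u$ solves the strongly convex primal subproblem, whose solution is unique, forcing $s^{(n+1)} = F'(u^{(n+1)})$ and hence pinning down $B_j^* p_j^{(n+1)}$ even when $p_j^{(n+1)}$ itself is not determined. The ``almost surely'' qualifier then accounts only for the measurable coupling of the two index streams and the measurable selection of the argmins, not for any genuine indeterminacy in the tracked quantities $u^{(n)}$ and $v_j^{(n)}$. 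A secondary point requiring care is the subdifferential calculus (the chain rule for the $F^*$-composition and the sum rule separating the smooth and nonsmooth parts), which is justified since $F^*$ is everywhere Fr\'echet differentiable and $F$ is smooth, so the standing qualification conditions hold.
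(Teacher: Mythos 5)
Your proof is correct and is essentially the argument the paper intends: the paper gives no explicit proof of \cref{Thm:OS}, deferring to the deterministic analogue in \cite[Theorem~5.4]{JPX:2025}, and your coupled pathwise induction via the change of variables $s^{(n)} = -\sum_{i=1}^J B_i^* p_i^{(n)}$ (with $F'(u^{(n)}) = s^{(n)}$ and $v_i^{(n)} = -B_i^* p_i^{(n)}$, verified by checking that $\hat u = (F^*)'(s^{(n+1)})$ with subgradient $\xi = p_j^{(n+1)}$ satisfies the strongly convex Peaceman--Rachford subproblem's optimality system) is precisely that dualization argument adapted to a shared random index stream. A further point in your favor: the verification direction only needs the easy inclusion $B_j^*\partial G_j(B_j \hat u) \subseteq \partial (G_j \circ B_j)(\hat u)$, so the constraint-qualification worry you raise at the end does not actually arise.
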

\begin{proof}
See~\cite[Theorem~5.4]{JPX:2025}.
\end{proof}
\section{Applications}
\label{Sec:Applications}
In this section, we present applications of the randomized subspace correction method to a range of problems arising in diverse areas of science and engineering.
See \cref{Table:applications} for a concise summary.

\begin{table}
    \centering
    \resizebox{\textwidth}{!}{
    \begin{tabular}{c|cc}
        \textbf{Applications} & \textbf{Stable decompositions} & \textbf{References} \\
        \hline
        linear problems & any & \makecell{randomized methods~\cite{GO:2012,HXZ:2019} \\ examples~\cite{GO:2024,Park:2024b,TW:2005}} \\
        \hline
        nonlinear PDEs  & \makecell{domain decomposition \\ multigrid} & \makecell{$s$-Laplacian~\cite{LP:2025a,TX:2002} \\ other examples~\cite{CHW:2020a,Park:2024a}}\\
        \hline
        variational inequalities & \makecell{domain decomposition \\ multigrid} & \makecell{second-order~\cite{BTW:2003,Tai:2003} \\ other examples~\cite{BK:2012,Carstensen:1997,Park:2024c}} \\
        \hline
        total variation minimization & domain decomposition & \makecell{dual form~\cite{CTWY:2015,HL:2015,LP:2019,Park:2021a} \\ primal form~\cite{LG:2019,LN:2017}} \\
        \hline
        multinomial logistic regression & nonoverlapping blocks & \makecell{dual form~\cite{SZ:2013,SZ:2014} \\ primal form~\cite{JPX:2025}} 
    \end{tabular}
    }
    \caption{Applications of the randomized subspace correction method discussed in this paper. Possible stable decompositions and relevant references are provided.}
    \label{Table:applications}
\end{table}

Following standard convention in the literature~(see, e.g.,~\cite{Xu:1992}), we write $x \lesssim y$, or equivalently $y \gtrsim x$, to mean that there exists a constant $C>0$, independent of the relevant parameters, such that $x \leq C y$. 
Furthermore, we write $x \eqsim y$ when both $x \lesssim y$ and $x \gtrsim y$ hold.

\subsection{Linear problems}
A fundamental class of problems covered by the randomized subspace correction method is the class of linear problems~\cite{GO:2012,HXZ:2019}, as discussed in \cref{Subsec:linear}. 
Given the extensive literature on subspace correction methods for linear systems, particularly those arising from the numerical discretization of elliptic PDEs, we omit detailed discussion for brevity. 
We refer the reader to~\cite{GO:2024,Park:2024b,TW:2005} and the references therein.

\subsection{Nonlinear partial differential equations}
The randomized subspace correction method is also applicable to nonlinear PDEs that admit convex variational formulations.
As an example, we briefly present the \( s \)-Laplacian problem, which was also considered in~\cite{LP:2025a,TX:2002}.
For other examples of nonlinear PDEs, one may refer to, e.g.,~\cite{CHW:2020a,Park:2024a}.

We consider the following nonlinear problem:
\begin{equation*}
\begin{split}
- \operatorname{div} ( | \nabla u |^{s-2} \nabla u ) = f & \quad \text{ in } \Omega, \\
u = 0 & \quad \text{ on } \partial \Omega,
\end{split}
\end{equation*}
where $\Omega \subset \mathbb{R}^2$ is a bounded polygonal domain, $s  \in (1, \infty)$ with $s \neq 2$, and $f \in W^{-1, s^*} (\Omega)$ with $1/s + 1/s^* = 1$.
It is well-known that the above problem admits a weak formulation given by the following convex optimization problem:
\begin{equation}
\label{sLap}
\min_{v \in W_0^{1,s} (\Omega) } \left\{ \frac{1}{s} \int_{\Omega} | \nabla v |^s \,dx - \langle f, v \rangle \right\}.
\end{equation}

To numerically solve~\eqref{sLap}, we employ a finite element discretization.
Let \( \mathcal{T}_h \) be a quasi-uniform triangulation of \( \Omega \), where \( h \) denotes the characteristic element diameter.
We denote by \( S_h(\Omega) \) the lowest-order Lagrangian finite element space defined on \( \mathcal{T}_h \), incorporating the homogeneous essential boundary condition.
Then, the finite element approximation of~\eqref{sLap} is given by
\begin{equation}
\label{sLap_FEM}
\min_{v \in S_h(\Omega)} \left\{ \frac{1}{s} \int_{\Omega} |\nabla v|^s \,dx - \langle f, v \rangle \right\}.
\end{equation}
We observe that~\eqref{sLap_FEM} is an instance of the abstract problem~\eqref{model}.
Namely,~\eqref{sLap_FEM} corresponds to~\eqref{model} with
\begin{equation*}
V = S_h(\Omega), \quad
F(v) = \frac{1}{s} \int_{\Omega} |\nabla v|^s \,dx - \langle f, v \rangle, \quad
G(v) = 0.
\end{equation*}

For the space decomposition~\eqref{space_decomposition}, two-level overlapping Schwarz and multigrid decompositions with exact local solvers were studied in~\cite{LP:2025a,Park:2020} and~\cite{TX:2002}, respectively.
Here, we analyze the two-level overlapping Schwarz method for solving~\eqref{sLap_FEM}~\cite{LP:2025a,Park:2020,TX:2002} within the proposed framework.
Let $\mathcal{T}_H$ be a quasi-uniform triangulation of $\Omega$ such that $\mathcal{T}_h$ is a refinement of $\mathcal{T}_H$, where $H$ denotes the characteristic element diameter of $\mathcal{T}_H$.
Let $\{ \Omega_i \}_{i=1}^N$ be a quasi-uniform overlapping domain decomposition of $\Omega$, where each subdomain $\Omega_i$ is a union of $\mathcal{T}_h$-elements with diameter of order $H$.
The overlap width is denoted by $\delta$.
We note that, for any bounded and convex subset $K \subset V$, the following estimate holds~\cite[equations~(6.6) and~(6.7)]{Park:2020}:
\begin{equation}
\label{sLap_Bregman}
\frac{\alpha_{K}}{p} \| u - v \|_{W^{1,s} (\Omega)}^p
\leq F(u) - F(v) - \langle F'(v), u - v \rangle
\leq \frac{\beta_{K}}{q} \|u - v \|_{W^{1,s} (\Omega)}^q,
\quad u,v \in K,
\end{equation}
where
\begin{equation*}
p = \max \{ s, 2 \}, \quad
q = \min \{ s, 2 \},
\end{equation*}
and $\alpha_K$, $\beta_K$ are positive constants depending on $K$.

In the two-level overlapping Schwarz method, we set
\begin{equation}
\label{two_level}
V_{N+1} = S_H (\Omega), \quad
V_i = S_h (\Omega_i), \quad i \in [N],
\end{equation}
where $S_H (\Omega)$ and $S_h (\Omega_i)$ are defined analogously to $S_h (\Omega)$.
For the two-level space decomposition with $J=N+1$,
\begin{equation}
\label{two_level_space_decomposition}
V = \sum_{j=1}^J V_j,
\end{equation}
the following stable decomposition result for~\eqref{two_level} with respect to the $\| \cdot \|_{W^{1,s}} (\Omega)$-norm is available in~\cite{LP:2025a,Park:2020,TX:2002}: for $w \in V$, there exist $w_j \in V_j$, $1 \leq j \leq J$, such that $w = \sum_{j=1}^J w_j$ and
\begin{equation}
\label{sLap_stable}
\sum_{j=1}^J \| w_j \|_{W^{1,s} (\Omega)}^s \lesssim \left( 1 + \left( \frac{H}{\delta} \right)^{s-1} \right) \| w \|_{W^{1,s} (\Omega)}^s.
\end{equation}
By~\eqref{sLap_Bregman} and~\eqref{sLap_stable}, \cref{Ass:stable} holds with
\begin{equation*}
C_{K_0} \eqsim 1 + \left( \frac{H}{\delta} \right)^{\frac{q(s-1)}{s}}.
\end{equation*}
Assuming that all local problems are solved exactly, i.e., using~\eqref{local_exact}, \cref{Ass:local} holds trivially.
Moreover, by~\eqref{sLap_Bregman}, \cref{Ass:sharp} holds with $\mu \eqsim 1$.
Invoking \cref{Thm:conv_sharp}, we obtain
\begin{equation*}
\mathbb{E}[E(u^{(n)})] - E(u)
\lesssim \frac{1 + (H/\delta)^{\frac{q(s-1)}{s}}}{(n/J)^{\frac{p(q-1)}{p-q}}}
\end{equation*}
That is, the two-level randomized Schwarz method for solving~\eqref{sLap_FEM} converges sublinearly with rate $\mathcal{O} (n^{- \frac{p(q-1)}{p-q}} )$.

\subsection{Variational inequalities}
Another important class of problems is variational inequalities, which find applications in computational mechanics and optimal control.
As an illustrative example, we consider a second-order problem~\cite{BTW:2003,Tai:2003}; see~\cite{BK:2012,Carstensen:1997,Park:2024c} for further examples.

We consider the following variational inequality: find \( u \in K \) such that
\begin{equation*}
\int_{\Omega} \nabla u \cdot \nabla v \,dx - \langle f, v \rangle \geq 0,
\quad v \in K,
\end{equation*}
where $\Omega \subset \mathbb{R}^2$ is a bounded polygonal domain, \( f \in H^{-1}(\Omega) \), and \( K \) is a subset of \( H_0^1(\Omega) \) representing a pointwise inequality constraint:
\begin{equation*}
K = \{ v \in H_0^1(\Omega) : v \geq g \text{ a.e. in } \Omega \},
\end{equation*}
for some \( g \in C(\Omega) \).
This problem admits the equivalent optimization formulation
\begin{equation*}
\min_{v \in K} \left\{ \frac{1}{2} \int_{\Omega} |\nabla v|^2 \,dx - \langle f, v \rangle \right\}.
\end{equation*}
Using a finite element discretization defined on \( S_h(\Omega) \), we obtain the discrete problem
\begin{equation}
\label{VI_FEM}
\min_{v \in S_h (\Omega)} \left\{ \frac{1}{2} \int_{\Omega} |\nabla v|^2 \,dx - \langle f, v \rangle + \chi_{K_h} (v) \right\},
\end{equation}
where \( K_h = K \cap S_h(\Omega) \), and \( \chi_{K_h} \) denotes the indicator functional of \( K_h \), defined as \( \chi_{K_h}(v) = 0 \) if \( v \in K_h \) and \( \chi_{K_h}(v) = \infty \) otherwise.
We observe that~\eqref{VI_FEM} is an instance of the general form~\eqref{model}.
Specifically, we obtain~\eqref{VI_FEM} if we set
\begin{equation*}
V = S_h(\Omega), \quad
F(v) = \frac{1}{2} \int_{\Omega} |\nabla v|^2 \,dx - \langle f, v \rangle, \quad
G(v) = \chi_{K_h}(v).
\end{equation*}

Two-level overlapping domain decomposition and multigrid decomposition with exact local solvers were studied in~\cite{Badea:2006,BTW:2003}.
In addition, constraint decomposition methods, based on localized constraints and interpretable as instances of inexact local solvers within our framework~\cite[Section~6.4]{Park:2020}, were considered in~\cite{BF:2024,Tai:2003}.
Here, we analyze the two-level overlapping Schwarz method for solving~\eqref{VI_FEM}, using the same subspaces as in~\eqref{two_level} and the space decomposition~\eqref{two_level_space_decomposition}.
We first observe that, since $F$ is quadratic, by the Poincar\'{e} inequality we readily have
\begin{equation}
\label{VI_Bregman}
    F(u) - F(v) - \langle F'(v), u-v \rangle \eqsim \| u - v \|_{H^1 (\Omega)}^2, \quad u,v \in V.
\end{equation}
Moreover,~\cite[Theorem~4.3]{Park:2024a}~(see also~\cite{BTW:2003,Tai:2003}) proves the following stable decomposition result for~\eqref{two_level} with respect to the $\| \cdot \|_{H^1 (\Omega)}$-norm: for $v,w \in V$, there exists $w_j \in V_j$, $1 \leq j \leq J$, such that $w = \sum_{j=1}^J w_j$ and
\begin{subequations}
\label{VI_stable}
\begin{align}
\sum_{j=1}^J \| w_j \|_{H^1 (\Omega)}^2 \lesssim \left(1 + \log \frac{H}{h} \right) \left( 1 + \frac{H}{\delta} \right) \| w \|_{H^1 (\Omega)}^2, \\
\sum_{j=1}^J G(v + w_j) \leq G(v + w) + J G(v).
\end{align}
\end{subequations}
By~\eqref{VI_Bregman} and~\eqref{VI_stable}, \cref{Ass:stable} holds with $q=2$ and
\begin{equation*}
    C_V \eqsim \left( 1 + \log \frac{H}{h} \right) \left(1 + \frac{H}{\delta} \right).
\end{equation*}
Assuming that all local problems are solved exactly, i.e., using~\eqref{local_exact}, \cref{Ass:local} holds trivially.
Moreover, by~\eqref{VI_Bregman}, \cref{Ass:strong} holds with $\mu \eqsim 1$.
Invoking \cref{Thm:conv_strong}, we obtain
\begin{equation*}
\mathbb{E} [E (u^{(n)})] - E(u) \leq \left( 1- \frac{1}{J C_V} \right)^n ( E (u^{(0)}) - E(u) ) .
\end{equation*}
That is, the two-level randomized Schwarz method for solving~\eqref{VI_FEM} converges linearly.

\subsection{Total variation minimization}
Total variation minimization is a fundamental problem in mathematical imaging; see, e.g.,~\cite{CP:2016}.
Given a bounded polygonal domain \( \Omega \subset \mathbb{R}^2 \), we consider the variational problem
\begin{equation}
\label{TV}
\min_{v \in BV(\Omega)} \left\{ \frac{1}{2} \int_{\Omega} (v - f)^2 \,dx + TV(v) \right\},
\end{equation}
where \( TV(v) \) denotes the total variation of \( v \), \( BV(\Omega) \) is the space of functions of bounded variation, and \( f \in L^2(\Omega) \).

Designing subspace correction methods for solving~\eqref{TV} is particularly challenging due to the nonseparable structure of the total variation term~\cite{Park:2021a}.
Indeed, it was shown in~\cite{LN:2017} that standard domain decomposition methods generally fail to satisfy the stable decomposition condition stated in \cref{Ass:stable}.

One viable approach is to instead consider the dual formulation of~\eqref{TV}~\cite{KH:2004}, which reads:
\begin{equation}
\label{TV_dual}
\min_{\mathbf{p} \in H_0 (\operatorname{div}; \Omega)} \frac{1}{2} \int_{\Omega} (\operatorname{div} \mathbf{p} + f)^2 \,dx
\quad \text{subject to} \quad
\| \mathbf{p} \|_{L^{\infty} (\Omega)} \leq 1.
\end{equation}
By employing the Raviart--Thomas finite element discretization introduced in~\cite{LPP:2019}, we obtain the discrete problem
\begin{equation}
\label{TV_dual_FEM}
\min_{\mathbf{p} \in \mathbf{S}_h (\Omega)} \left\{ \frac{1}{2} \int_{\Omega} (\operatorname{div} \mathbf{p} + f)^2 \,dx + \chi_{\mathbf{K}_h} (\mathbf{p}) \right\},
\end{equation}
where \( \mathbf{S}_h (\Omega) \) denotes the lowest-order Raviart--Thomas finite element space on the mesh \( \mathcal{T}_h \), and \( \mathbf{K}_h \subset \mathbf{S}_h (\Omega) \) is a convex set encoding the constraint \( \| \mathbf{p} \|_{L^{\infty} (\Omega)} \leq 1 \).
The discrete problem~\eqref{TV_dual_FEM} fits the abstract formulation~\eqref{model}, with the following identifications:
\begin{equation*}
V = \mathbf{S}_h (\Omega), \quad
F(\mathbf{p}) = \frac{1}{2} \int_{\Omega} (\operatorname{div} \mathbf{p} + f)^2 \,dx, \quad
G(\mathbf{p}) = \chi_{\mathbf{K}_h} (\mathbf{p}).
\end{equation*}

Schwarz-type domain decomposition methods for the dual problem~\eqref{TV_dual_FEM} were analyzed in~\cite{Park:2021a}, and related constraint decomposition techniques were introduced in~\cite{CTWY:2015}; see also~\cite{HL:2015,LP:2019} for domain decomposition methods for finite difference discretizations of~\eqref{TV_dual}.
Here, we analyze the one-level overlapping Schwarz method for solving~\eqref{TV_dual_FEM}.
In this method, we assume that the domain $\Omega$ is decomposed into overlapping subdomains $\{ \Omega_j \}_{j=1}^J$ as before.
We set
\begin{equation*}
    V_j = \mathbf{S}_h (\Omega_j),
    \quad j \in [J],
\end{equation*}
where $\mathbf{S}_h (\Omega_j)$ is defined analogously to $\mathbf{S}_h (\Omega)$.
Then we obtain the one-level space decomposition~\eqref{space_decomposition}.

We observe that $F$ satisfies
\begin{equation}
\label{TV_Bregman}
    F (\mathbf{p}) - F(\mathbf{q}) - \langle F'(\mathbf{q}), \mathbf{p} - \mathbf{q} \rangle = \frac{1}{2} \| \operatorname{div} ( \mathbf{p} - \mathbf{q} ) \|_{L^2 (\Omega)}^2,
    \quad \mathbf{p},\ \mathbf{q} \in V.
\end{equation}
It was shown in~\cite[Lemma~4.2]{Park:2021a} that the following one-level stable decomposition with respect to the $\| \cdot \|_{H (\operatorname{div}; \Omega)}$-norm holds:
for $\mathbf{q}, \mathbf{r} \in V$, there exist $\mathbf{r}_j \in V_j$, $j \in [J]$, such that $\mathbf{r} = \sum_{j=1}^J \mathbf{r}_j$ and
\begin{subequations}
\label{TV_stable}
\begin{align}
\sum_{j=1}^J \| \operatorname{div} \mathbf{r}_j \|_{L^2 (\Omega)}^2 \lesssim \frac{1}{\delta^2} \| \mathbf{r} \|_{H (\operatorname{div}; \Omega)}^2, \\
\sum_{j=1}^J G(\mathbf{q} + \mathbf{r}_j) \leq G(\mathbf{q} + \mathbf{r}) + (J-1) G(\mathbf{q}).
\end{align}
\end{subequations}
By~\eqref{TV_Bregman} and~\eqref{TV_stable}, \cref{Ass:stable} holds with $q=2$ and
\begin{equation*}
    C_V \eqsim \frac{1}{\delta^2}.
\end{equation*}
Assuming that all local problems are solved exactly, \cref{Ass:local} holds.
Invoking \cref{Thm:conv}, we obtain
\begin{equation*}
\mathbb{E}[E(\mathbf{p}^{(n)})] - E(\mathbf{p})
\lesssim \frac{\delta^{-2}}{n/J},
\end{equation*}
where $\mathbf{p} \in V$ denotes a solution of~\eqref{TV_dual_FEM}.
That is, the one-level randomized Schwarz method for solving~\eqref{TV_dual_FEM} converges sublinearly with rate $\mathcal{O}(n^{-1})$.

Meanwhile, to tackle the primal problem~\eqref{TV} directly, one may employ the operator splitting approach described in \cref{Subsec:OS} to design domain decomposition methods.
In this context, it was shown in~\cite[Section~5.5.1]{JPX:2025} that the resulting methods coincide with those proposed in~\cite{LN:2017}; see also~\cite{LG:2019}.

\subsection{Multinomial logistic regression}
Multinomial logistic regression is a fundamental tool for classification problems in machine learning.  
Given a labeled dataset \( \{ (x_{i}, y_{i} )\}_{i=1}^{N} \subset \mathbb{R}^d \times [k] \), where each \( x_{i} \in \mathbb{R}^d \) represents a data point and \( y_{i} \in [k] \) its corresponding class label, the regression model is formulated as follows:
\begin{equation}
\label{logistic_regression}
    \min_{\theta \in \mathbb{R}^{(d+1)k}}
    \left\{ \frac{1}{N} \left( \sum_{i=1}^{N} \operatorname{LSE}_k (X_{i}^T \theta) - \hat{x}^T \theta \right)  + \frac{\alpha}{2} \| \theta \|^2 \right\},
\end{equation}
where \( \theta = [w_1^T, b_1, \dots, w_k^T, b_k]^T \in \mathbb{R}^{(d+1)k} \) is the parameter vector, \( \operatorname{LSE}_k \) denotes the log-sum-exp function over \( k \) classes, \( X_{i} \in \mathbb{R}^{(d+1)k \times k} \) is defined as \( X_{i} = I_k \otimes [x_{i}^T, 1]^T \), and $\alpha$ is a positive regularization hyperparameter.
The vector \( \hat{x} \in \mathbb{R}^{(d+1)k} \) is a constant vector depending only on the dataset; see~\cite[Example~2.10]{JPX:2025} for details.
Note that~\eqref{logistic_regression} is an instance of~\eqref{model_OS} with $J = N$.

In big data settings, where the number of data points \( N \) is very large, operator splitting methods such as stochastic gradient descent~\cite{Bertsekas:2011}, which process individual data points or mini-batches, are widely used to solve~\eqref{logistic_regression}.
The randomized Peaceman--Rachford splitting algorithm presented in \cref{Alg:OS} is one such method.

Thanks to \cref{Thm:OS}, the convergence analysis of \cref{Alg:OS} for solving~\eqref{logistic_regression} reduces to analyzing the randomized subspace correction method for the following dual problem~\cite{SZ:2013,SZ:2014}:
\begin{equation}
\label{logistic_regression_dual}
\min_{(p_{i})_{i=1}^{N} \in (\mathbb{R}^{k})^N} \left\{ \frac{1}{2 N \alpha} \left\| \sum_{i=1}^{N} X_{i} p_{i} - \hat{x} \right\|^2 + \sum_{i=1}^{N} \operatorname{LSE}_k^* (p_{i}) \right\}.
\end{equation}
More precisely, we consider the randomized subspace correction method for solving~\eqref{logistic_regression_dual} with a nonoverlapping $N$-block decomposition, i.e., in~\eqref{space_decomposition} we set
\begin{equation*}
    J = N, \quad
    V = (\mathbb{R}^{k})^N, \quad
    V_i = \mathbb{R}^k, \quad i \in [N].
\end{equation*}
Since the decomposition is nonoverlapping, each $p \in V$ admits a unique decomposition $p = (p_i)_{i=1}^N$ with respect to the direct sum $\bigoplus_{i=1}^N V_i$.
Moreover,~\eqref{logistic_regression_dual} is an instance of~\eqref{model} with
\begin{equation*}
    F(p) = \frac{1}{2 N \alpha} \left\| \sum_{i=1}^{N} X_{i} p_{i} - \hat{x} \right\|^2, \quad
    G(p) = \sum_{i=1}^{N} \operatorname{LSE}_k^* (p_{i}).
\end{equation*}
Since the term \( \sum_{i=1}^{N} \operatorname{LSE}_k^* (p_{i}) \) is block separable, an argument analogous to that in \cref{Subsec:BCD} shows that \cref{Ass:stable} holds with \( q = 2 \).
Consequently, \cref{Thm:conv} applies to establish convergence of the randomized subspace correction method for solving~\eqref{logistic_regression_dual}.

\section{Numerical results}
\label{Sec:Numerical}
In this section, we present numerical results for some applications of the randomized subspace correction method.
All algorithms were implemented in MATLAB~R2024b and run on a desktop equipped with an AMD Ryzen~5 5600X CPU~(3.7\,GHz, 6C), 40\,GB RAM, and Windows~10 Pro.

\subsection{\texorpdfstring{$s$}{s}-Laplacian problem}
As the first example, we consider two-level overlapping Schwarz methods for the $s$-Laplacian problem~\eqref{sLap}.
The additive and randomized Schwarz methods correspond to the parallel and randomized subspace correction methods in \cref{Alg:PSC,Alg:RSC}, respectively.

In our numerical experiments, we adopt the setting in~\cite{LP:2025a}.
We set $\Omega = (0,1)^2 \subset \mathbb{R}^2$, $s \in \{1.5, 5\}$, and $f = 1$.
For the finite element discretization~\eqref{sLap_FEM} and the two-level space decomposition~\eqref{two_level_space_decomposition}, we partition $\Omega$ into $2 \times 1/H \times 1/H$ uniform triangles to obtain a coarse triangulation $\mathcal{T}_H$, and then refine it to obtain a fine triangulation $\mathcal{T}_h$ with $2 \times 1/h \times 1/h$ triangles.
Each nonoverlapping subdomain $D_i$, $i \in [N]$ with $N = 1/H \times 1/H$, is defined as a rectangular region consisting of two coarse triangles sharing a diagonal edge.
The corresponding overlapping subdomain $D_i'$ is obtained by enlarging $D_i$ with layers of fine triangles of width $\delta$.
The decomposition $\{ D_i' \}_{i=1}^N$ can be colored with four colors when $\delta$ is sufficiently small; see~\cite[Figure~1]{TX:2002}.
Let $\Omega_j$, $j \in [N_c]$ with $N_c = 4$, denote the union of all $D_i'$ with the $j$th color.
Defining $V_j$, $j \in [J]$ with $J = N_c + 1 = 5$, as in~\eqref{two_level}, we obtain the two-level space decomposition~\eqref{two_level_space_decomposition}.
This coloring-based space decomposition enables parallel computation in \cref{Alg:RSC}: all subdomain problems in the same color can be solved in parallel, while in \cref{Alg:PSC} all subdomain problems regardless of color can be solved in parallel.
In \cref{Alg:PSC}, we choose the step size $\tau = 1/5$ (cf.~\cite{LP:2025a}).

In both \cref{Alg:PSC,Alg:RSC}, we initialize $u^{(0)}$ randomly.
For solving the local and coarse problems on $V_j$, $1 \le j \le J$, we use the adaptive Newton method in~\cite{Mishchenko:2023} with the stopping criterion
\begin{equation}
\label{stop}
\left| \frac{E_j^{(n)}(w_j^{(m+1)}) - E_j^{(n)}(w_j^{(m)})}{E_j^{(n)}(w_j^{(m+1)})} \right| < 10^{-12}.
\end{equation}
Here $E_j^{(n)}$ denotes the energy functional associated with $V_j$ at the $n$th outer iteration, and $m$ is the number of inner iterations.
To obtain a reference solution $u_h$, we apply the adaptive Newton method~\cite{Mishchenko:2023} to the full problem~\eqref{sLap_FEM} with sufficiently many iterations.

\begin{figure}
\centering
\subfloat[][$s = 1.5$]{ \includegraphics[width=0.3\linewidth]{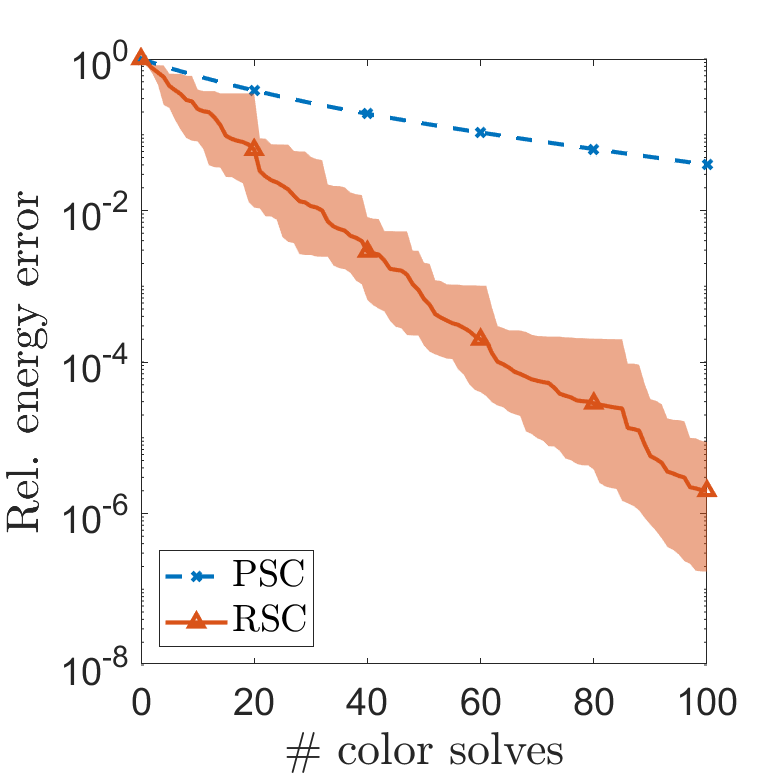} }
\quad
\subfloat[][$s = 5.0$]{ \includegraphics[width=0.3\linewidth]{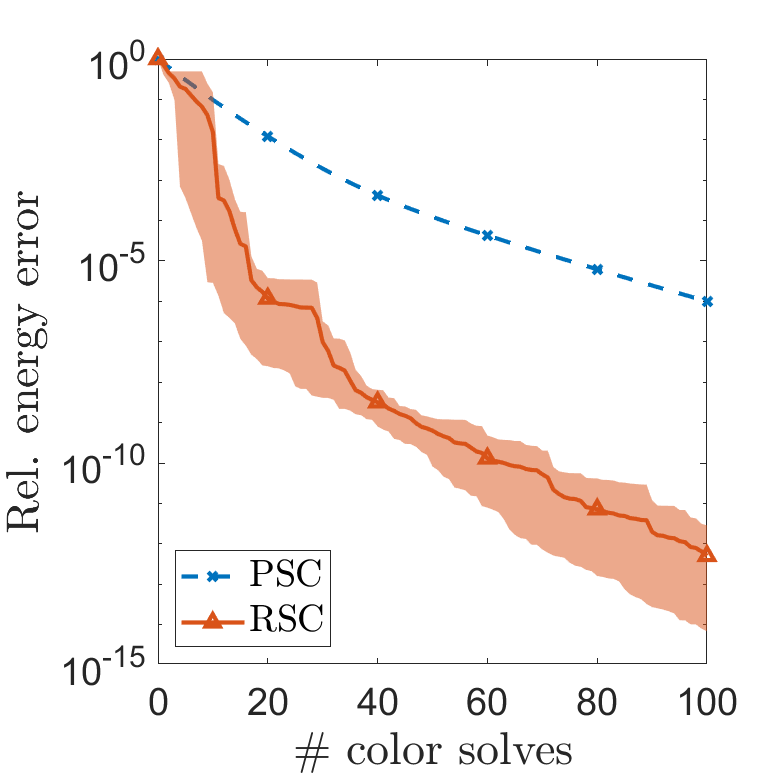} }
\caption{Decay of the relative energy error~\eqref{rel_energy_error} in Algorithms~\ref{Alg:PSC}~(PSC) and~\ref{Alg:RSC}~(RSC) for the $s$-Laplacian problem~\eqref{sLap} based on a two-level domain decomposition~($h = 2^{-6}$, $H = 2^{-2}$, $\delta = h$).
The RSC curve is obtained by averaging over 10 independent experiments, and the shaded region indicates the range from the minimum to the maximum values across all experiments.}
\label{Fig:sLap}
\end{figure}

In \cref{Fig:sLap}, we depict convergence curves of the relative energy error
\begin{equation}
\label{rel_energy_error}
\frac{E(u^{(n)}) - E(u)}{E(u^{(0)}) - E(u)}
\end{equation}
with respect to the number of color solves generated by \cref{Alg:PSC,Alg:RSC} based on the two-level space decomposition~\eqref{two_level_space_decomposition}.
Note that each outer iteration of \cref{Alg:PSC} is counted as $J+1 = 5$ color solves.
We observe that \cref{Alg:RSC} converges faster than \cref{Alg:PSC}, reflecting the well-known fact that successive methods such as Gauss--Seidel converge faster than their parallel counterparts such as Jacobi.
Moreover, \cref{Alg:RSC} is robust with respect to the sampling of subspace indices: even the worst case over all experiments, represented by the upper boundary of the shaded region in \cref{Fig:sLap}, outperforms \cref{Alg:PSC}.

\subsection{\texorpdfstring{$L^1$}{L1}-penalized problem}
As the second example, we consider two-level overlapping Schwarz methods~\cite[Section~5.3]{Park:2024a} for the $L^1$-penalized problem introduced in~\cite{TSFO:2015}:
\begin{equation}
\label{L1}
    \min_{v \in H_0^1 (\Omega)} \int_{\Omega} \left( \frac{1}{2} | \nabla v |^2 - gv + \alpha | v | \right) \,dx,
\end{equation}
where $\alpha > 0$.
In our numerical experiments, we adopt the setting in~\cite{Park:2024a}.
We set $\Omega = (0,1)^2 \subset \mathbb{R}^2$, $g(x,y) = 10^3 x(1-x) \sin(\pi y)$, and $\alpha \in \{10, 30\}$.
For discretization and domain decomposition, we use the same settings as for the $s$-Laplacian problem~\eqref{sLap}.
Note that the corresponding finite element discretization of~\eqref{L1} is an instance of~\eqref{model} with
\begin{equation*}
    V = S_h (\Omega), \quad
    F(v) = \frac{1}{2} \int_{\Omega} | \nabla v |^2 \,dx - \int_{\Omega} g v \,dx, \quad
    G(v) = \alpha \int_{\Omega} | v | \,dx.
\end{equation*}

Since~\eqref{L1} is nonsmooth, the adaptive Newton method introduced above is not applicable.
We employ the fast gradient method with adaptive restart (AFGM)~\cite{OC:2015}.
For solving the local problems, we use AFGM with the stopping criterion~\eqref{stop}.
For the coarse problems, we apply AFGM to the dual formulations of the coarse problems, akin to the approach introduced in~\cite[Proposition~5.1]{Park:2024c}.
A reference solution $u_h$ is obtained by running AFGM on the full problem with sufficiently many iterations.

\begin{figure}
\centering
\subfloat[][$\alpha = 10$]{ \includegraphics[width=0.3\linewidth]{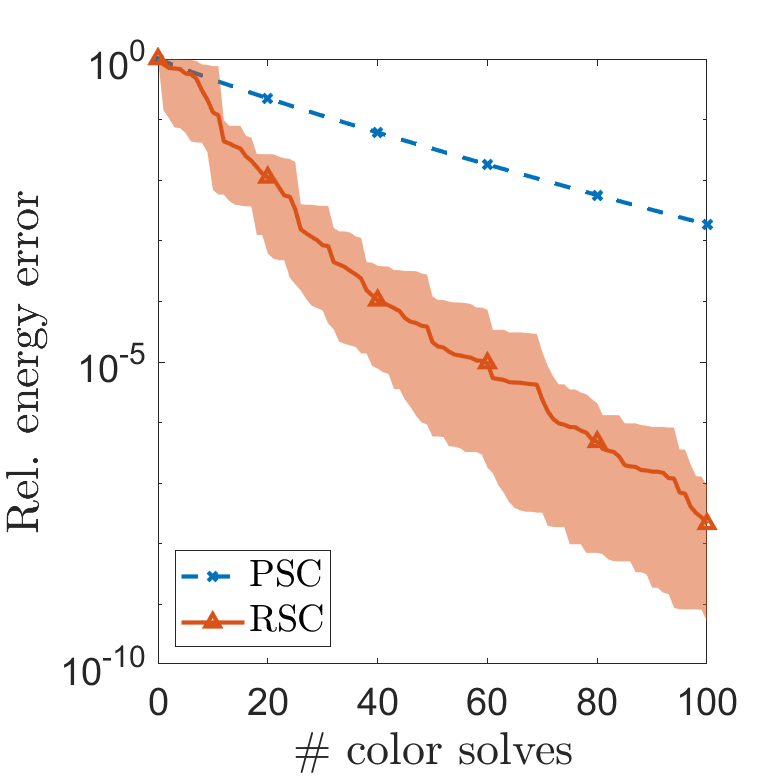} }
\quad
\subfloat[][$\alpha = 30$]{ \includegraphics[width=0.3\linewidth]{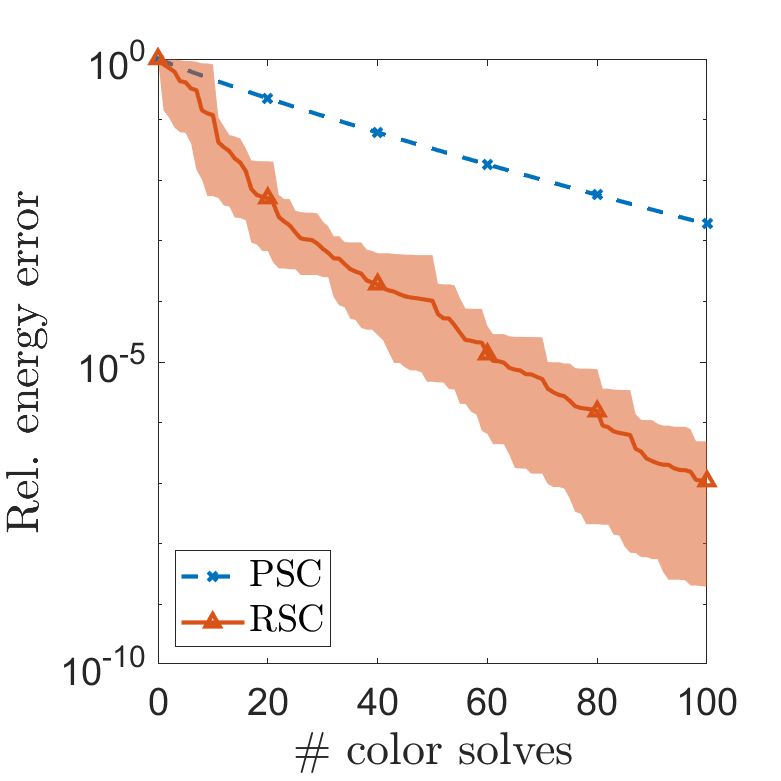} }
\caption{Decay of the relative energy error~\eqref{rel_energy_error} in Algorithms~\ref{Alg:PSC}~(PSC) and~\ref{Alg:RSC}~(RSC) for the $L^1$-penalized problem~\eqref{L1} based on a two-level domain decomposition~($h = 2^{-6}$, $H = 2^{-2}$, $\delta = h$).
The RSC curve is obtained by averaging over 10 independent experiments, and the shaded region indicates the range from the minimum to the maximum values across all experiments.}
\label{Fig:L1}
\end{figure}

In \cref{Fig:L1}, we depict convergence curves of \cref{Alg:PSC,Alg:RSC} based on the two-level space decomposition~\eqref{two_level_space_decomposition}.
A similar discussion applies as in the case of the $s$-Laplacian problem.
Namely, \cref{Alg:RSC} converges faster than \cref{Alg:PSC} with respect to the number of color solves, and is also robust with respect to the sampling of subspace indices.

\section{Conclusion}
\label{Sec:Conclusion}
In this paper, we introduced an abstract framework for randomized subspace correction methods for convex optimization.
We established convergence theorems that are applicable to a broad range of scenarios involving space decomposition, local solvers, and varying levels of problem regularity.
Furthermore, we demonstrated that these theorems unify and extend several relevant recent results.

This work opens several avenues for future research.
One important direction is the development of convergence theory for cyclic successive subspace correction methods. 
This is particularly relevant for multigrid methods~\cite{Badea:2006,TX:2002}, where the hierarchical structure of subspaces plays a critical role.
We note that a sharp convergence analysis of cyclic methods for linear problems can be found in~\cite{Brenner:2013,XZ:2002}.
The block coordinate descent method with $q = 2$ was analyzed in~\cite{BT:2013}; however, the results there are not sharp in the sense that the analysis does not recover the sharp estimate established for the linear case.

Another promising direction is to extend the proposed framework to nonconvex problems.
Recent work~\cite{CN:2023} has shown that randomized block coordinate descent methods are effective for a certain class of nonconvex problems.
Generalizing the randomized subspace correction framework to accommodate such settings remains an interesting and challenging problem.


\bibliographystyle{siamplain}
\bibliography{refs_SC}

\end{document}